\newtheorem{thm}{Theorem}
\newtheorem{lem}[thm]{Lemma}
\newtheorem{cor}[thm]{Corollary}
\newtheorem{prop}[thm]{Proposition}
\theoremstyle{definition}
\newtheorem{ex}[thm]{Example}
\newtheorem{rem}[thm]{Remark}
\providecommand{\abs}[1]{\lvert#1\rvert}
\providecommand{\Abs}[1]{\Bigl\lvert#1\Bigr\rvert}
\begin{document}
\title[CLT for multicolor urns]{Central limit theorems for multicolor\\urns with dominated colors}
\author{Patrizia Berti}
\address{Patrizia Berti, Dipartimento di Matematica Pura ed Applicata ''G. Vitali'', Universita' di Modena e Reggio-Emilia, via Campi 213/B, 41100 Modena, Italy}
\email{patrizia.berti@unimore.it}
\author{Irene Crimaldi}
\address{Irene Crimaldi, Dipartimento di Matematica, Universita' di
Bologna, Piazza di Porta San Donato 5, 40126 Bologna, Italy}
\email{crimaldi@dm.unibo.it}
\author{Luca Pratelli}
\address{Luca Pratelli, Accademia Navale, viale Italia 72, 57100 Livorno,
Italy} \email{pratel@mail.dm.unipi.it}
\author{Pietro Rigo}
\address{Pietro Rigo (corresponding author), Dipartimento di Economia Politica e Metodi Quantitativi, Universita' di Pavia, via S. Felice 5, 27100 Pavia, Italy}
\email{prigo@eco.unipv.it} \keywords{Central limit theorem --
Clinical trials -- Random probability measure -- Stable convergence
-- Urn model} \subjclass[2000]{60F05, 60G57, 60B10}

\date{\today}

\begin{abstract}
An urn contains balls of $d\geq 2$ colors. At each time $n\geq 1$, a
ball is drawn and then replaced together with a random number of
balls of the same color. Let ${\bf
A_n}=\,$diag$\bigl(A_{n,1},\ldots,A_{n,d}\bigr)$ be the $n$-th
reinforce matrix. Assuming $EA_{n,j}=EA_{n,1}$ for all $n$ and $j$,
a few CLT's are available for such urns. In real problems, however,
it is more reasonable to assume
\begin{gather*}EA_{n,j}=EA_{n,1}\quad\text{whenever }n\geq 1\text{ and
}1\leq j\leq d_0,
\\\liminf_nEA_{n,1}>\limsup_nEA_{n,j}\quad\text{whenever }j>d_0,
\end{gather*}
for some integer $1\leq d_0\leq d$. Under this condition, the usual
weak limit theorems may fail, but it is still possible to prove
CLT's for some slightly different random quantities. These random
quantities are obtained neglecting dominated colors, i.e., colors
from $d_0+1$ to $d$, and allow the same inference on the urn
structure. The sequence $({\bf A_n}:n\geq 1)$ is independent but
need not be identically distributed. Some statistical applications
are given as well.
\end{abstract}

\maketitle

\section{The problem}\label{prob}

An urn contains $a_j>0$ balls of color $j\in\{1,\ldots,d\}$ where
$d\geq 2$. At each time $n\geq 1$, a ball is drawn and then replaced
together with a random number of balls of the same color. Say that
$A_{n,j}\geq 0$ balls of color $j$ are added to the urn in case
$X_{n,j}=1$, where $X_{n,j}$ is the indicator of $\{$ball of color
$j$ at time $n\}$. Let
\begin{gather*}
N_{n,j}=a_j+\sum_{k=1}^nX_{k,j}A_{k,j}
\end{gather*}
be the number of balls of color $j$ in the urn at time $n$ and
\begin{gather*}
Z_{n,j}=\frac{N_{n,j}}{\sum_{i=1}^dN_{n,i}},\quad M_{n,j}=\frac{\sum_{k=1}^nX_{k,j}}{n}.
\end{gather*}

Fix $j$ and let $n\rightarrow\infty$. Then, under various
conditions, $Z_{n,j}\overset{a.s.}\longrightarrow Z_{(j)}$ for some
random variable $Z_{(j)}$. This typically implies
$M_{n,j}\overset{a.s.}\longrightarrow Z_{(j)}$. A CLT is available
as well. Define in fact
\begin{equation*}
C_{n,j}=\sqrt{n}\,\bigl(M_{n,j}-Z_{n,j}\bigr)\quad\text{and}\quad
D_{n,j}=\sqrt{n}\,\bigl(Z_{n,j}-Z_{(j)}\bigr).
\end{equation*}
As shown in \cite{BCPR}, under reasonable conditions one obtains
\begin{gather*}
(C_{n,j},\,D_{n,j})\longrightarrow\mathcal{N}(0,U_j)\times\mathcal{N}(0,V_j)\quad\text{stably}
\end{gather*}
for certain random variables $U_j$ and $V_j$. A nice consequence is
\begin{gather*}
\sqrt{n}\,\bigl(M_{n,j}-Z_{(j)}\bigr)=C_{n,j}+D_{n,j}\longrightarrow\mathcal{N}(0,U_j+V_j)\quad\text{stably}.
\end{gather*}
Stable convergence, in the sense of Aldous and Renyi, is a strong
form of convergence in distribution. The definition is recalled in
Section \ref{prel}.

For $(C_{n,j},\,D_{n,j})$ to converge, it is fundamental that
$EA_{n,j}=EA_{n,1}$ for all $n$ and $j$. In real problems, however,
it is more sound to assume that
\begin{gather*}
EA_{n,j}=EA_{n,1}\quad\text{whenever }n\geq 1\text{ and }1\leq j\leq
d_0,
\\\liminf_nEA_{n,1}>\limsup_nEA_{n,j}\quad\text{whenever }j>d_0,\notag
\end{gather*}
for some integer $1\leq d_0\leq d$. Roughly speaking, when $d_0<d$
some colors (those labelled from $d_0+1$ to $d$) are dominated by
the others. In this framework, for $j\in\{1,\ldots,d_0\}$,
meaningful quantities are
\begin{gather*}
C_{n,j}^*=\sqrt{n}\,\bigl(M_{n,j}^*-Z_{n,j}^*\bigr)\quad\text{and}\quad
D_{n,j}^*=\sqrt{n}\,\bigl(Z_{n,j}^*-Z_{(j)}\bigr)\quad\text{where}
\\M_{n,j}^*=\frac{\sum_{k=1}^nX_{k,j}}{1+\sum_{i=1}^{d_0}\sum_{k=1}^nX_{k,i}}\,,\quad
Z_{n,j}^*=\frac{N_{n,j}}{\sum_{i=1}^{d_0}N_{n,i}}.
\end{gather*}
If $d_0=d$, then $D_{n,j}^*=D_{n,j}$ and
$\abs{C_{n,j}^*-C_{n,j}}\leq\frac{1}{\sqrt{n}}$. If $d_0<d$, in a
sense, dealing with $(C_{n,j}^*,\,D_{n,j}^*)$ amounts to neglecting
dominated colors.

Our problem is to determine the limiting distribution of
$(C_{n,j}^*,\,D_{n,j}^*)$, under reasonable conditions, when
$d_0<d$.

\section{Motivations}\label{mot56fg}

Possibly, when $d_0<d$, $Z_{n,j}$ and $M_{n,j}$ have a more
transparent meaning than their counterparts $Z_{n,j}^*$ and
$M_{n,j}^*$. Accordingly, a CLT for $(C_{n,j},\,D_{n,j})$ is more
intriguing than a CLT for $(C_{n,j}^*,\,D_{n,j}^*)$. So, why dealing
with $(C_{n,j}^*,\,D_{n,j}^*)$ ?

The main reason is that $(C_{n,j},\,D_{n,j})$ merely fails to
converge in case
\begin{equation}\label{mf56gh}
\liminf_nEA_{n,j}>\frac{1}{2}\,\liminf_nEA_{n,1}\quad\text{for some
}j>d_0.
\end{equation}
Fix in fact $j\leq d_0$. Under some conditions,
$Z_{n,j}\overset{a.s.}\longrightarrow Z_{(j)}$ with $Z_{(j)}>0$
a.s.; see Lemma \ref{mf65tr}. Furthermore, condition \eqref{mf56gh}
yields
$\sqrt{n}\,\sum_{i=d_0+1}^dZ_{n,i}\overset{a.s.}\longrightarrow\infty$.
(This follows from Corollary 2 of \cite{MF} for $d=2$, but it can be
shown in general). Hence,
\begin{equation*}
D_{n,j}^*-D_{n,j}\geq
Z_{n,j}\,\sqrt{n}\sum_{i=d_0+1}^dZ_{n,i}\overset{a.s.}\longrightarrow\infty.\end{equation*}
Since $D_{n,j}^*$ converges stably, as proved in Theorem \ref{main},
$D_{n,j}$ fails to converge in distribution under \eqref{mf56gh}.

A CLT for $D_{n,j}$, thus, is generally not available. A way out
could be looking for the right norming factors, that is,
investigating whether $\frac{\alpha_n}{\sqrt{n}}\,D_{n,j}$ converges
stably for suitable constants $\alpha_n$. This is a reasonable
solution but we discarded it. In fact, as proved in Corollary
\ref{cor560ew3}, $(C_{n,j},\,D_{n,j})$ converges stably whenever
\begin{equation}\label{nuovcon45d}
\limsup_nEA_{n,j}<\frac{1}{2}\,\liminf_nEA_{n,1}\quad\text{for all
}j>d_0.\tag{1*}
\end{equation}
So, the choice of $\alpha_n$ depends on whether \eqref{mf56gh} or
\eqref{nuovcon45d} holds, and this is typically unknown in
applications (think to clinical trials). In addition, dealing with
$(C_{n,j}^*,\,D_{n,j}^*)$ looks natural (to us). Loosely speaking,
as the problem occurs because there are some dominated colors, the
trivial solution is just to neglect dominated colors.

A next point to be discussed is the practical utility (if any) of a
CLT for $(C_{n,j}^*,\,D_{n,j}^*)$ or $(C_{n,j},\,D_{n,j})$. To fix
ideas, we refer to $(C_{n,j}^*,\,D_{n,j}^*)$ but the same comments
apply to $(C_{n,j},\,D_{n,j})$ provided a CLT for the latter is
available. It is convenient to distinguish two situations. With
reference to a real problem, suppose the subset of {\em non
dominated} colors is some $J\subset\{1,\ldots,d\}$ and not
necessarily $\{1,\ldots,d_0\}$.

If $J$ is known, the main goal is to make inference on $Z_{(j)}$,
$j\in J$. To this end, the limiting distribution of $D_{n,j}^*$ is
useful. Knowing such distribution, for instance, asymptotic
confidence intervals for $Z_{(j)}$ are easily obtained. An example
(cf. Example \ref{ex1}) is given in Section \ref{m123a}.

But in various frameworks, $J$ is actually unknown (think to
clinical trials again). Then, the main focus is to identify $J$ and
the limiting distribution of $C_{n,j}^*$ can help. If such
distribution is known, the hypothesis \begin{equation*} H_0:J=J^*
\end{equation*} can be (asymptotically) tested for any
$J^*\subset\{1,\ldots,d\}$ with card$(J^*)\geq 2$. Details are in
Examples \ref{ex2} and \ref{ex3}.

A last remark is that our results become trivial for $d_0=1$. On one
hand, this is certainly a gap, as $d_0=1$ is important in
applications. On the other hand, $d_0=1$ is itself a trivial case.
Indeed, $Z_{(1)}=1$ a.s., so that no inference on $Z_{(1)}$ is
required.

This paper is the natural continuation of \cite{BCPR}. While the
latter deals with $d_0=d$, the present paper focus on $d_0<d$.
Indeed, our results hold for $d_0\leq d$, but they are contained in
Corollary 9 of \cite{BCPR} in the particular case $d_0=d$. In
addition to \cite{BCPR}, a few papers which inspired and affected
the present one are \cite{AMS} and \cite{MF}. Other related
references are \cite{BH}, \cite{BPR}, \cite{CLP}, \cite{J04},
\cite{J05}, \cite{MPS}, \cite{P}.

The paper is organized as follows. Section \ref{prel} recalls some
basic facts on stable convergence. Section \ref{m123a} includes the
main results (Theorem \ref{main} and Corollary \ref{cor560ew3}).
Precisely, conditions for
\begin{gather*}
(C_{n,j}^*,\,D_{n,j}^*)\longrightarrow\mathcal{N}(0,U_j)\times\mathcal{N}(0,V_j)\quad\text{stably
and}
\\(C_{n,j},\,D_{n,j})\longrightarrow\mathcal{N}(0,U_j)\times\mathcal{N}(0,V_j)\quad\text{stably
under \eqref{nuovcon45d}}
\end{gather*}
are given, $U_j$ and $V_j$ being the same random variables mentioned
in Section \ref{prob}. As a consequence,
\begin{gather*}
\sqrt{n}\,\bigl(M_{n,j}^*-Z_{(j)}\bigr)=C_{n,j}^*+D_{n,j}^*\longrightarrow\mathcal{N}(0,U_j+V_j)\quad\text{stably
and}
\\\sqrt{n}\,\bigl(M_{n,j}-Z_{(j)}\bigr)=C_{n,j}+D_{n,j}\longrightarrow\mathcal{N}(0,U_j+V_j)\quad\text{stably under \eqref{nuovcon45d}}.
\end{gather*}
Also, it is worth noting that $D_{n,j}^*$ and $D_{n,j}$ actually
converge in a certain stronger sense.

Finally, our proofs are admittedly long. To make the paper more
readable, they have been confined in Section \ref{proof56tvb8} and
in a final Appendix.

\section{Stable convergence}\label{prel}

Let $(\Omega,\mathcal{A},P)$ be a probability space and $S$ a metric
space. A {\em kernel} on $S$ (or a {\em random probability measure}
on $S$) is a measurable collection $N=\{N(\omega):\omega\in\Omega\}$
of probability measures on the Borel $\sigma$-field on $S$.
Measurability means that
\begin{equation*}
N(\omega)(f)=\int f(x)\,N(\omega)(dx)
\end{equation*}
is $\mathcal{A}$-measurable, as a function of $\omega\in\Omega$, for
each bounded Borel map $f:S\rightarrow\mathbb{R}$.

Let $(Y_n)$ be a sequence of $S$-valued random variables and $N$ a
kernel on $S$. Both $(Y_n)$ and $N$ are defined on
$(\Omega,\mathcal{A},P)$. Say that $Y_n$ converges {\it stably} to
$N$ in case
\begin{gather*}
P\bigl(Y_n\in\cdot\mid H\bigr)\longrightarrow E\bigl(N(\cdot)\mid
H\bigr)\quad\text{weakly}
\\\text{for all }H\in\mathcal{A}\text{ such that }P(H)>0.
\end{gather*}
Clearly, if $Y_n\rightarrow N$ stably, then $Y_n$ converges in
distribution to the probability law $E\bigl(N(\cdot)\bigr)$ (just
let $H=\Omega$). We refer to \cite{CLP} and references therein for
more on stable convergence. Here, we mention a strong form of stable
convergence, introduced in \cite{CLP}. Let
$\mathcal{F}=(\mathcal{F}_n)$ be any sequence of sub-$\sigma$-fields
of $\mathcal{A}$. Say that $Y_n$ converges $\mathcal{F}$-{\it stably
in strong sense} to $N$ in case
\begin{gather*}
E\bigl(f(Y_n)\mid\mathcal{F}_n\bigr)\overset{P}\longrightarrow
N(f)\quad\text{for all bounded continuous functions
}f:S\rightarrow\mathbb{R}.
\end{gather*}

Finally, we give two lemmas from \cite{BCPR}. In both,
$\mathcal{G}=(\mathcal{G}_n)$ is an increasing filtration. Given
kernels $M$ and $N$ on $S$, let $M\times N$ denote the kernel on
$S\times S$ defined as
\begin{equation*} \bigl(M\times N\bigr)(\omega)=M(\omega)\times
N(\omega)\quad\text{for all }\omega\in\Omega. \end{equation*}

\begin{lem}\label{plm}
Let $Y_n$ and $Z_n$ be $S$-valued random variables and $M$ and $N$
kernels on $S$, where $S$ is a separable metric space. Suppose
$\sigma(Y_n)\subset\mathcal{G}_n$ and
$\sigma(Z_n)\subset\mathcal{G}_\infty$ for all $n$, where
$\mathcal{G}_\infty=\sigma(\cup_n\mathcal{G}_n)$. Then,
\begin{equation*}
(Y_n,Z_n)\longrightarrow M\times N\quad\text{stably}
\end{equation*}
provided $Y_n\rightarrow M$ stably and $Z_n\rightarrow N$
$\mathcal{G}$-stably in strong sense.
\end{lem}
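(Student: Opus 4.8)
The plan is to verify directly the definition of stable convergence, reducing first to product test functions and then exploiting the adaptedness of $(Y_n)$ together with the strong form of convergence of $(Z_n)$. First I note two preliminary facts. Since $\mathcal{G}_n\subset\mathcal{G}_\infty$ for every $n$, each conditional expectation $E\bigl(h(Z_n)\mid\mathcal{G}_n\bigr)$ is $\mathcal{G}_\infty$-measurable, hence so is its $P$-limit $N(h)$; thus $N$ can be taken $\mathcal{G}_\infty$-measurable. Also, $M\times N$ is a genuine kernel on $S\times S$, since the rectangles generate the Borel $\sigma$-field of the separable metric space $S\times S$ and $\omega\mapsto M(\omega)(B_1)\,N(\omega)(B_2)$ is measurable. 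Now, because the products $f\otimes h$, with $f,h$ bounded continuous on $S$, form a convergence determining class on $S\times S$, it suffices to prove
\[
E\bigl(\mathbf{1}_H\,f(Y_n)\,h(Z_n)\bigr)\longrightarrow E\bigl(\mathbf{1}_H\,M(f)\,N(h)\bigr)
\]
for every $H\in\mathcal{A}$ and all bounded continuous $f,h\colon S\to\mathbb{R}$.

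For the main estimate, fix such $f,h$ with $\norm{f}_\infty\le 1$ and $\norm{h}_\infty\le 1$, and fix $H\in\mathcal{A}$. The identity $f(Y_n)h(Z_n)-M(f)N(h)=f(Y_n)\bigl(h(Z_n)-N(h)\bigr)+N(h)\bigl(f(Y_n)-M(f)\bigr)$ splits the difference into two terms. The term $E\bigl(\mathbf{1}_H N(h)(f(Y_n)-M(f))\bigr)$ tends to $0$: since $Y_n\to M$ stably, $E\bigl(T f(Y_n)\bigr)\to E\bigl(T M(f)\bigr)$ for every bounded random variable $T$ (approximate $T$ uniformly by simple functions and use the definition), and here $T=\mathbf{1}_H N(h)$ is bounded. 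For the term $E\bigl(\mathbf{1}_H f(Y_n)(h(Z_n)-N(h))\bigr)$, put $\psi=E(\mathbf{1}_H\mid\mathcal{G}_\infty)$; since $f(Y_n)$, $h(Z_n)$ and $N(h)$ are $\mathcal{G}_\infty$-measurable, this term equals $E\bigl(\psi\,f(Y_n)(h(Z_n)-N(h))\bigr)$. Given $\varepsilon>0$, pick $m$ with $\norm{\psi-\psi_m}_{L^1}<\varepsilon$, where $\psi_m=E(\psi\mid\mathcal{G}_m)$; then the term differs from $E\bigl(\psi_m f(Y_n)(h(Z_n)-N(h))\bigr)$ by at most $2\varepsilon$, uniformly in $n$. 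For $n\ge m$ the random variable $\psi_m f(Y_n)$ is $\mathcal{G}_n$-measurable (the only point where $\sigma(Y_n)\subset\mathcal{G}_n$ enters), so
\[
\Bigl|E\bigl(\psi_m f(Y_n)(h(Z_n)-N(h))\bigr)\Bigr|=\Bigl|E\bigl(\psi_m f(Y_n)(E(h(Z_n)\mid\mathcal{G}_n)-N(h))\bigr)\Bigr|\le E\bigl|E(h(Z_n)\mid\mathcal{G}_n)-N(h)\bigr|,
\]
which tends to $0$ because $E(h(Z_n)\mid\mathcal{G}_n)\overset{P}\longrightarrow N(h)$ and the integrands are uniformly bounded. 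Hence $\limsup_n$ of this term is at most $2\varepsilon$, and letting $\varepsilon\downarrow 0$ completes the proof.

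The delicate step is controlling $E\bigl(\mathbf{1}_H f(Y_n)(h(Z_n)-N(h))\bigr)$: we do not know that $h(Z_n)\to N(h)$ pointwise or in $L^1$, only that $E(h(Z_n)\mid\mathcal{G}_n)\to N(h)$ in probability, so a naive bound by $E\abs{h(Z_n)-N(h)}$ is unavailable. The resolution is the three-move argument above — transfer the conditioning set into $\mathcal{G}_\infty$, approximate it by a $\mathcal{G}_m$-measurable set, and only then use the adaptedness of $(Y_n)$ to replace $h(Z_n)$ by its $\mathcal{G}_n$-conditional expectation at no cost — and performing these moves in the right order is essentially the whole content; everything else (the decomposition, the uniform approximation of $T$ and of $\psi$, the reduction to product test functions) is routine bookkeeping.
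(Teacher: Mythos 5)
Your argument is correct. Note, however, that this paper does not prove Lemma \ref{plm} at all: it is imported verbatim from \cite{BCPR} (``we give two lemmas from \cite{BCPR}''), so there is no in-paper proof to compare against; your write-up is in effect supplying the missing argument, and it follows the standard route one would expect (reduction to product test functions via the convergence-determining class $\{f\otimes h\}$ on the separable space $S\times S$, the splitting $f(Y_n)h(Z_n)-M(f)N(h)=f(Y_n)\bigl(h(Z_n)-N(h)\bigr)+N(h)\bigl(f(Y_n)-M(f)\bigr)$, extension of stable convergence to bounded weights for the second term, and the $\mathcal{G}_m$-approximation of $E(\mathbf{1}_H\mid\mathcal{G}_\infty)$ followed by conditioning on $\mathcal{G}_n$ for the first). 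All the steps check out: in particular the identity $E\bigl(\psi_m f(Y_n)(h(Z_n)-N(h))\bigr)=E\bigl(\psi_m f(Y_n)(E(h(Z_n)\mid\mathcal{G}_n)-N(h))\bigr)$ only requires conditioning the $h(Z_n)$ factor, so it does not need $N(h)$ to be $\mathcal{G}_n$-measurable, and your remark that $N(h)$ is (a.s.\ equal to) a $\mathcal{G}_\infty$-measurable variable for each fixed $h$ is exactly what the tower-property step with $\psi=E(\mathbf{1}_H\mid\mathcal{G}_\infty)$ needs --- the stronger claim that the whole kernel $N$ ``can be taken $\mathcal{G}_\infty$-measurable'' is not actually used and could be dropped.
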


\begin{lem}\label{hftuimn}
Let $(Y_n)$ be a $\mathcal{G}$-adapted sequence of real random
variables. If $\sum_{n=1}^\infty\frac{EY_n^2}{n^2}<\infty$ and
$E\bigl(Y_{n+1}\mid\mathcal{G}_n\bigr)\overset{a.s.}\longrightarrow
Y$, for some random variable $Y$, then
\begin{equation*}
n\sum_{k\geq n}\frac{Y_k}{k^2}\,\overset{a.s.}\longrightarrow
Y\quad\text{and}\quad\frac{1}{n}\sum_{k=1}^nY_k\overset{a.s.}\longrightarrow
Y.
\end{equation*}
\end{lem}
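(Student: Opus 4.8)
The plan is to split $Y_n$ into a martingale-difference part and a ``predictable'' part, kill the former by an $L^2$-martingale argument plus Kronecker's lemma and the latter by Ces\`aro averaging, and then deduce the tail-sum statement from the Ces\`aro one by summation by parts.

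Set $\mathcal{G}_0=\{\emptyset,\Omega\}$ and, for $n\geq 1$,
$D_n=Y_n-E(Y_n\mid\mathcal{G}_{n-1})$ and $b_n=E(Y_n\mid\mathcal{G}_{n-1})$, so that $Y_n=D_n+b_n$, the sequence $(D_n)$ is a $\mathcal{G}$-martingale difference sequence, and $b_n\to Y$ a.s.\ (this is the hypothesis, reindexed). Since $E(D_n^2)=E(Y_n^2)-E\bigl(E(Y_n\mid\mathcal{G}_{n-1})^2\bigr)\leq E(Y_n^2)$, the martingale $M_n=\sum_{k=1}^n D_k/k$ satisfies $\sup_n E(M_n^2)=\sum_k E(D_k^2)/k^2\leq\sum_k E(Y_k^2)/k^2<\infty$, hence converges a.s.; the same bound with $k^4$ in place of $k^2$ shows $\sum_k D_k/k^2$ converges a.s., while $\sum_k b_k/k^2$ converges a.s.\ because $(b_n)$ is a.s.\ bounded, being convergent. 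In particular $\sum_k Y_k/k^2$ converges a.s.

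Next, Kronecker's lemma applied pathwise to the a.s.\ convergent series $\sum_k D_k/k$ gives $n^{-1}\sum_{k=1}^n D_k\to 0$ a.s., and Ces\`aro's lemma applied to $b_k\to Y$ gives $n^{-1}\sum_{k=1}^n b_k\to Y$ a.s.; adding the two yields $n^{-1}\sum_{k=1}^n Y_k\to Y$ a.s., which is the second assertion. Writing $S_m=\sum_{k=1}^m Y_k$, we thus know $S_m/m\to Y$ a.s., in particular $S_m=O(m)$ a.s. For the first assertion, fix $n$ and sum by parts using $Y_k=S_k-S_{k-1}$:
\begin{equation*}
\sum_{k=n}^{N}\frac{Y_k}{k^2}=\frac{S_N}{N^2}-\frac{S_{n-1}}{n^2}+\sum_{k=n}^{N-1}S_k\Bigl(\frac{1}{k^2}-\frac{1}{(k+1)^2}\Bigr),\qquad \frac{1}{k^2}-\frac{1}{(k+1)^2}=\frac{2k+1}{k^2(k+1)^2}.
\end{equation*}
Letting $N\to\infty$ (the right side converges a.s., since $S_N/N^2\to 0$ and $S_k=O(k)$ with $\sum_k k\cdot\frac{2k+1}{k^2(k+1)^2}<\infty$) and multiplying by $n$,
\begin{equation*}
n\sum_{k\geq n}\frac{Y_k}{k^2}=-\frac{S_{n-1}}{n}+\sum_{k\geq n}\frac{S_k}{k}\,w_{n,k},\qquad w_{n,k}=\frac{n(2k+1)}{k(k+1)^2}.
\end{equation*}
Here $-S_{n-1}/n\to -Y$ a.s. For the sum, the identity $\frac{2k+1}{k(k+1)^2}=\frac{1}{k}-\frac{1}{k+1}+\frac{1}{(k+1)^2}$ gives $\sum_{k\geq n}\frac{2k+1}{k(k+1)^2}=\frac{1}{n}+\sum_{j\geq n+1}\frac{1}{j^2}$, so $\sum_k w_{n,k}\to 2$ while $w_{n,k}\to 0$ for each fixed $k$ and $\sup_n\sum_k w_{n,k}<\infty$; by Toeplitz's lemma applied to $S_k/k\to Y$, the sum tends to $2Y$ a.s. Hence $n\sum_{k\geq n}Y_k/k^2\to -Y+2Y=Y$ a.s.

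The only slightly delicate points are bookkeeping ones: justifying that the relevant tail series converge a.s.\ (so that the Abel rearrangement and the passage $N\to\infty$ are legitimate), and checking the three Toeplitz hypotheses for the triangular array $(w_{n,k})$, in particular the asymptotics $\sum_{k\geq n}w_{n,k}\to 2$. Everything else is the standard $L^2$-martingale / Kronecker / Ces\`aro machinery.
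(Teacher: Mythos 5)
Your proof is correct: the decomposition $Y_n=D_n+b_n$ with $D_n=Y_n-E(Y_n\mid\mathcal{G}_{n-1})$, the $L^2$-martingale bound $E(D_n^2)\leq E(Y_n^2)$ combined with Kronecker and Ces\`aro for the averages, and the Abel summation plus Toeplitz argument (with $\sum_{k\geq n}w_{n,k}\to 2$, giving $-Y+2Y=Y$) for the tail sums all check out. The paper itself states this lemma without proof, importing it from \cite{BCPR}, and your argument is essentially the standard one used there, so there is nothing to flag.
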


\section{Main results}\label{m123a}

In the sequel, $X_{n,j}$ and $A_{n,j}$, $n\geq 1$, $1\leq j\leq d$,
are real random variables on the probability space
$(\Omega,\mathcal{A},P)$ and $\mathcal{G}=(\mathcal{G}_n:n\geq 0)$,
where
\begin{equation*}
\mathcal{G}_0=\{\emptyset,\Omega\},\quad\mathcal{G}_n=\sigma\bigl(X_{k,j},\,A_{k,j}:1\leq
k\leq n,\,1\leq j\leq d\bigr).
\end{equation*}
Let $N_{n,j}=a_j+\sum_{k=1}^nX_{k,j}A_{k,j}$ where $a_j>0$ is a constant. We assume that
\begin{gather}\label{indip}
X_{n,j}\in\{0,1\},\quad\sum_{j=1}^dX_{n,j}=1,\quad 0\leq
A_{n,j}\leq\beta\quad\text{for some constant }\beta,
\\\bigl(A_{n,j}:1\leq j\leq
d\bigr)\,\text{ independent of
}\,\mathcal{G}_{n-1}\vee\sigma\bigl(X_{n,j}:1\leq j\leq
d\bigr),\notag
\\
Z_{n,j}=P\bigl(X_{n+1,j}=1\mid\mathcal{G}_n\bigr)=\frac{N_{n,j}}{\sum_{i=1}^dN_{n,i}}\,\,\text{
a.s.}.\notag
\end{gather}
Given an integer $1\leq d_0\leq d$, let us define
\begin{equation*}
\lambda_0=0\,\text{ if }\,d_0=d\,\text{ and
}\,\lambda_0=\max_{d_0<j\leq d}\limsup_nEA_{n,j}\,\,\text{ if
}\,d_0<d.
\end{equation*}
We also assume that
\begin{gather}\label{hg5dr}
EA_{n,j}=EA_{n,1}\quad\text{for }n\geq 1\text{ and }1\leq j\leq d_0,
\\m:=\lim_nEA_{n,1},\quad m>\lambda_0,\quad q_j:=\lim_nEA_{n,j}^2\quad\text{for }1\leq j\leq d_0.\notag
\end{gather}

A few useful consequences are collected in the following lemma.
Define
\begin{gather*}
S_n^*=\sum_{i=1}^{d_0}N_{n,i}\quad\text{and}\quad
S_n=\sum_{i=1}^dN_{n,i}.
\end{gather*}

\begin{lem}\label{mf65tr} Under conditions \eqref{indip}-\eqref{hg5dr}, as $n\rightarrow\infty$,
\begin{gather*}
\frac{S_n^*}{n}\overset{a.s.}\longrightarrow m\quad\text{and}\quad
\frac{S_n}{n}\overset{a.s.}\longrightarrow m,
\\n^{1-\lambda}\sum_{i=d_0+1}^dZ_{n,i}\overset{a.s.}\longrightarrow
0\quad\text{ whenever }d_0<d\text{ and }\lambda>\frac{\lambda_0}{m},
\\Z_{n,j}\overset{a.s.}\longrightarrow Z_{(j)}\quad\text{for each
}1\leq j\leq d_0,
\end{gather*}
where each $Z_{(j)}$ is a random variable such that $Z_{(j)}>0$
a.s..
\end{lem}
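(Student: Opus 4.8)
\emph{Proof plan.} Write $\xi_k=\sum_{j=1}^dX_{k,j}A_{k,j}$ and $\eta_k=\sum_{j=1}^{d_0}X_{k,j}A_{k,j}$, so that $S_n=S_0+\sum_{k\le n}\xi_k$, $S_n^*=S_0^*+\sum_{k\le n}\eta_k$ with $S_0=\sum_ja_j$, $S_0^*=\sum_{j\le d_0}a_j$, and put $Y_n=S_n-S_n^*=\sum_{i>d_0}N_{n,i}$, $p_n=Y_n/S_n=\sum_{i>d_0}Z_{n,i}$. Since $Z_{n-1,j}=N_{n-1,j}/S_{n-1}$ and $EA_{n,j}=EA_{n,1}$ for $j\le d_0$, one has the identities $E(N_{n,j}\mid\mathcal G_{n-1})=N_{n-1,j}\bigl(1+EA_{n,1}/S_{n-1}\bigr)$ and $E(S_n^*\mid\mathcal G_{n-1})=S_{n-1}^*\bigl(1+EA_{n,1}/S_{n-1}\bigr)$ for $j\le d_0$, together with $E(\eta_n\mid\mathcal G_{n-1})=EA_{n,1}(1-p_{n-1})$ and $E(\xi_n\mid\mathcal G_{n-1})=EA_{n,1}(1-p_{n-1})+\sum_{i>d_0}Z_{n-1,i}EA_{n,i}$. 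The whole statement hinges on showing $p_n\to0$ a.s. (with the stated rate when $d_0<d$); the rest is then routine, as explained below (if $d_0=d$ then $p_n\equiv0$ and the rate statement is vacuous). A preliminary remark: $E(X_{k,j}A_{k,j}\mid\mathcal G_{k-1})=Z_{k-1,j}EA_{k,1}\ge a_jEA_{k,1}/(S_0+(k-1)\beta)$ sums to $+\infty$ a.s. (because $EA_{k,1}\to m>0$), and $X_{k,j}A_{k,j}$ is non-negative and bounded by $\beta$, so by the conditional Borel--Cantelli lemma $N_{n,j}\uparrow\infty$ a.s. for every $j\le d_0$; in particular $S_n^*\to\infty$ and $S_n\to\infty$ a.s.

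\emph{Core step: the rate for $p_n$ when $d_0<d$.} Fix $\lambda>\lambda_0/m$, choose $\gamma\in(\lambda_0/m,\lambda)\cap(0,1)$ and $\epsilon>0$ small with $\gamma(m-\epsilon)>\lambda_0+\epsilon$, and set $T_n:=Y_n/(S_n^*)^\gamma$. At each step exactly one of $Y_n,S_n^*$ increases (according as the drawn colour is dominated or not), so $E(T_n\mid\mathcal G_{n-1})$ is an explicit convex combination; using $EA_{n,i}\le\lambda_0+\epsilon$ for $i>d_0$ and $EA_{n,1}\ge m-\epsilon$ for $n$ large, the bound $(1+x)^{-\gamma}\le1-\gamma x+\tfrac{\gamma(\gamma+1)}2x^2$ ($x\ge0$), and $Y_n/S_n^*=p_n/(1-p_n)$, one gets for $n$ large
\[
E(T_n\mid\mathcal G_{n-1})\le T_{n-1}\Bigl(1+\tfrac{(\lambda_0+\epsilon)-(1-p_{n-1})\gamma EA_{n,1}}{S_{n-1}^*}+\tfrac{C}{(S_{n-1}^*)^2}\Bigr).
\]
As long as $p_{n-1}$ stays small the bracket is $<1$, so $T_n$ is eventually a supermartingale up to an a.s.\ summable error, hence bounded a.s.; together with a linear lower bound for $S_n^*$ this forces $Y_n=O(n^\gamma)$ and therefore $n^{1-\lambda}p_n=n^{1-\lambda}Y_n/S_n\to0$ a.s. \textbf{This is the main obstacle.} The drift denominator is $S_{n-1}^*$, whose linear growth is itself only available \emph{after} $p_n\to0$, and one must also rule out $p_n$ accumulating near $1$; the way out is a bootstrap: from a cruder version of the comparison above (small $\gamma$) one first obtains a polynomial lower bound for $S_n^*$ and a bound $\limsup_np_n<1$, which suffices to deduce $p_n\to0$, and then one re-runs the estimate at the optimal exponent. (This is in the spirit of \cite{MF}, \cite{AMS}, where the two-colour case is treated.)

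\emph{Consequences of $p_n\to0$.} Applying Lemma~\ref{hftuimn} to $Y_k=\eta_k$ and to $Y_k=\xi_k$ — their conditional expectations displayed above converge a.s. to $m$ (the extra term in $E(\xi_n\mid\mathcal G_{n-1})$ is bounded by $\beta p_{n-1}\to0$), and $EA_{k,j}\le\beta$ makes $\sum_kEY_k^2/k^2<\infty$ — yields $S_n^*/n\to m$ and $S_n/n\to m$ a.s. Next, for $j\le d_0$, because $N_{n,j}$ and $S_n^*$ have the same multiplicative drift, a direct computation shows $W_{n,j}:=N_{n,j}/S_n^*$ would be a martingale if the reinforcements were deterministic, and in general $\abs{E(W_{n,j}\mid\mathcal G_{n-1})-W_{n-1,j}}\le C/(S_{n-1}^*)^2$, which is a.s.\ summable since $S_n^*/n\to m$. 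As $0\le W_{n,j}\le1$, decomposing $W_{n,j}$ into a martingale plus an a.s.\ absolutely convergent series gives $W_{n,j}\to W_{(j)}$ a.s., whence $Z_{n,j}=W_{n,j}(1-p_n)\to W_{(j)}=:Z_{(j)}$.

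\emph{Positivity of $Z_{(j)}$.} Apply the same drift cancellation to the non-negative process $\log\bigl(S_n^*/N_{n,j}\bigr)=\log(1/W_{n,j})$: its conditional increments are bounded by $C'/(S_{n-1}N_{n-1,j})$, and these have summable expectations because $S_n/n\to m$ while $N_{n,j}$ grows at least at a polynomial rate (a consequence of $E(N_{n,j}\mid\mathcal G_{n-1})\ge N_{n-1,j}(1+(m-\epsilon)/S_{n-1})$, extracted along the bootstrap). Hence $\sup_nE\log\bigl(S_n^*/N_{n,j}\bigr)<\infty$, so by Fatou $E\log(1/Z_{(j)})<\infty$, and therefore $Z_{(j)}>0$ a.s.
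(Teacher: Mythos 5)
Your overall architecture is right (preliminary $N_{n,j}\to\infty$, a supermartingale controlling the dominated mass, Lemma~\ref{hftuimn} for $S_n/n\to m$, the quasi-martingale $W_{n,j}=Z_{n,j}^*$ for convergence, a logarithmic process for positivity), but the step you yourself flag as ``the main obstacle'' is a genuine gap, and the bootstrap you invoke to close it is not a proof and, as described, cannot work. The obstacle is an artifact of loose bookkeeping. From your own identities, $E(\Delta S_n^*\mid\mathcal G_{n-1})/S_{n-1}^*=EA_{n,1}/S_{n-1}$ and $E(\Delta Y_n\mid\mathcal G_{n-1})/Y_{n-1}\le(\lambda_0+\epsilon)\frac{\sum_{i>d_0}Z_{n-1,i}}{Y_{n-1}}=(\lambda_0+\epsilon)/S_{n-1}$: \emph{both} relative drifts have denominator $S_{n-1}$, so the linear part of the comparison is $\bigl((\lambda_0+\epsilon)-\gamma EA_{n,1}\frac{S_{n-1}^*}{S_{n-1}^*+\beta}\bigr)/S_{n-1}$, which is eventually negative as soon as $S_{n-1}^*\to\infty$ --- no smallness of $p_{n-1}$ is needed and no bootstrap is required. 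Your displayed bound puts the upward drift over $S_{n-1}^*$ instead of $S_{n-1}$, which is where the spurious requirement $(1-p_{n-1})\gamma EA_{n,1}>\lambda_0+\epsilon$ comes from; and the proposed fix (``a cruder version with small $\gamma$'') goes in the wrong direction, since decreasing $\gamma$ \emph{weakens} the negative drift and makes the condition on $p_{n-1}$ harder, not easier, to satisfy. There is a second independent problem in the same step: your second-order Taylor remainder $C/(S_{n-1}^*)^2$ must be dominated by the negative term $\delta/S_{n-1}$, i.e.\ you need $(S_{n-1}^*)^2\gg S_{n-1}$, which is not available before linear growth of $S_n^*$ is known (a priori $S_n^*$ could grow arbitrarily slowly). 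The paper avoids this entirely by applying $(1-x)^\lambda\le1-\lambda x$ to $(S_n^*/S_{n+1}^*)^\lambda$ (expanding around the updated value), which produces no second-order term, and by running the supermartingale argument per dominated color, $\tau_{n,j}=N_{n,j}/(S_n^*)^\lambda$; convergence of $\tau_{n,j}$ then yields both $Z_{n,j}\to0$ and, after $S_n/n\to m$ is in hand, the rate $n^{1-\lambda}Z_{n,j}\to0$.

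A smaller but still real gap is in the positivity step: the summability of the increments of $\log(S_n^*/N_{n,j})$ needs $\sum_n 1/(S_nN_{n,j})<\infty$, for which you assert that $N_{n,j}$ ``grows at least at a polynomial rate\dots extracted along the bootstrap.'' Since the bootstrap is not carried out, this is unsupported; the drift inequality $E(N_{n,j}\mid\mathcal G_{n-1})\ge N_{n-1,j}(1+(m-\epsilon)/S_{n-1})$ alone does not give an almost sure polynomial lower bound. The paper supplies exactly this missing ingredient with a second supermartingale, $R_{n,i}=(S_n^*)^u/N_{n,i}$ for $u\in(0,1)$, whose a.s.\ convergence gives $N_{n,i}\gtrsim(S_n^*)^u\sim(mn)^u$ and hence the required summability. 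Your remaining steps (the conditional Borel--Cantelli argument for $N_{n,j}\to\infty$, the application of Lemma~\ref{hftuimn}, and the observation that $W_{n,j}=Z_{n,j}^*$ has drift $O(1/(S_n^*)^2)$ so that $Z_{n,j}=W_{n,j}(1-p_n)$ converges) are correct and in fact slightly cleaner than the paper's treatment of $Z_{n,j}$ directly; but as written the proof does not stand without repairing the core rate estimate and the growth bound on $N_{n,j}$.
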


For $d=2$, Lemma \ref{mf65tr} follows from results in \cite{MF} and
\cite{MPS}. For arbitrary $d$, it is possibly known but we do not
know of any reference. Accordingly, a proof of Lemma \ref{mf65tr} is
given in the Appendix. We also note that, apart from a few
particular cases, the probability distribution of $Z_{(j)}$ is not
known (even if $d_0=d$).

We aim to settle the asymptotic behavior of
\begin{gather*}
C_{n,j}=\sqrt{n}\,\bigl(M_{n,j}-Z_{n,j}\bigr),\quad
D_{n,j}=\sqrt{n}\,\bigl(Z_{n,j}-Z_{(j)}\bigr),
\\C_{n,j}^*=\sqrt{n}\,\bigl(M_{n,j}^*-Z_{n,j}^*\bigr),\quad
D_{n,j}^*=\sqrt{n}\,\bigl(Z_{n,j}^*-Z_{(j)}\bigr),
\end{gather*}
where $j\in\{1,\ldots,d_0\}$ and
\begin{gather*}
M_{n,j}=\frac{\sum_{k=1}^nX_{k,j}}{n},\quad
M_{n,j}^*=\frac{\sum_{k=1}^nX_{k,j}}{1+\sum_{k=1}^n\sum_{i=1}^{d_0}X_{k,i}},
\quad
Z_{n,j}^*=\frac{N_{n,j}}{\sum_{i=1}^{d_0}N_{n,i}}.
\end{gather*}

Let $\mathcal{N}(a,b)$ denote the one-dimensional Gaussian law with
mean $a$ and variance $b\geq 0$ (where $\mathcal{N}(a,0)=\delta_a$).
Note that $\mathcal{N}(0,L)$ is a kernel on $\mathbb{R}$ for each
real non negative random variable $L$. We are in a position to state
our main result.

\begin{thm}\label{main} If conditions
\eqref{indip}-\eqref{hg5dr} hold, then
\begin{gather*}
C_{n,j}^*\longrightarrow \mathcal{N}(0,U_j)\text{ stably and }
\\D_{n,j}^*\longrightarrow \mathcal{N}(0,V_j)\,\,\,\,\mathcal{G}\text{-stably in strong
sense}
\\\text{for each }\,j\in\{1,\ldots,d_0\},\text{ where }\,U_j=V_j-Z_{(j)}(1-Z_{(j)})
\\\text{and }\,V_j=\frac{Z_{(j)}}{m^2}\,\bigl\{\,q_j\,(1-Z_{(j)})^2\,+\,Z_{(j)}\sum_{i\leq
d_0,i\neq j}q_i\,Z_{(i)}\,\bigr\}.
\end{gather*}
In particular (by Lemma \ref{plm}),
\begin{gather*}
(C_{n,j}^*,\,D_{n,j}^*)\longrightarrow\mathcal{N}(0,U_j)\times\mathcal{N}(0,V_j)
\text{ stably}.
\end{gather*}
\end{thm}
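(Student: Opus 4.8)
The plan is to reduce both statistics, up to an $o_P(1)$ error, to normalized sums of $\mathcal{G}$-martingale differences, identify the limiting conditional variances, quote the (stable) martingale central limit theorems of \cite{CLP} and \cite{BCPR}, and finally combine the two marginals through Lemma \ref{plm} — note $\sigma(C_{n,j}^*)\subset\mathcal{G}_n$ (both $M_{n,j}^*$ and $Z_{n,j}^*$ are $\mathcal{G}_n$-measurable) while $D_{n,j}^*$ is $\mathcal{G}_\infty$-measurable since $Z_{(j)}=\lim_n Z_{n,j}^*$, so that lemma applies. Throughout, write $\rho_k=\sum_{i\le d_0}X_{k,i}\in\{0,1\}$ and $W_k=\sum_{i\le d_0}X_{k,i}A_{k,i}$, and introduce $\xi_k=X_{k,j}A_{k,j}-Z_{k-1,j}^*\,W_k$ and $\zeta_k=X_{k,j}-Z_{k-1,j}^*\,\rho_k$. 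Using $EA_{k,i}=EA_{k,1}$ for $i\le d_0$ and the identity $Z_{k-1,j}^*\,(S_{k-1}^*/S_{k-1})=N_{k-1,j}/S_{k-1}=Z_{k-1,j}$, one checks $E(\xi_k\mid\mathcal{G}_{k-1})=E(\zeta_k\mid\mathcal{G}_{k-1})=0$, and both $|\xi_k|$ and $|\zeta_k|$ are bounded.

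\emph{The statistic $D_{n,j}^*$.} A direct computation gives $Z_{l,j}^*-Z_{l-1,j}^*=\xi_l/(S_{l-1}^*+W_l)$, so, since $Z_{n,j}^*\to Z_{(j)}$ a.s.\ by Lemma \ref{mf65tr}, telescoping yields $Z_{n,j}^*-Z_{(j)}=-\sum_{l>n}\xi_l/(S_{l-1}^*+W_l)$. Replacing the denominator by $S_{l-1}^*$ costs at most a constant times $\sum_{l>n}(S_{l-1}^*)^{-2}=O(1/n)$ a.s.\ (using $S_n^*/n\to m$), hence $D_{n,j}^*=-\sqrt n\sum_{l>n}\xi_l/S_{l-1}^*+o_P(1)$. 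Because $X_{k,i}X_{k,i'}=0$ for $i\ne i'$, expanding the square and using $Z_{l-1,i}\to Z_{(i)}$ and $EA_{l,i}^2\to q_i$ gives $E(\xi_l^2\mid\mathcal{G}_{l-1})\to Z_{(j)}\{q_j(1-Z_{(j)})^2+Z_{(j)}\sum_{i\le d_0,i\ne j}q_iZ_{(i)}\}=m^2V_j$. Together with $S_{l-1}^*/l\to m$ and $n\sum_{l>n}l^{-2}\to1$ this gives $n\sum_{l>n}E(\xi_l^2\mid\mathcal{G}_{l-1})/(S_{l-1}^*)^2\to V_j$; conditional Lindeberg is trivial as $\xi_l$ is bounded and $S_{l-1}^*$ grows linearly. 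The strong-form stable central limit theorem for martingale tails then yields $D_{n,j}^*\to\mathcal{N}(0,V_j)$ $\mathcal{G}$-stably in strong sense.

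\emph{The statistic $C_{n,j}^*$.} With $Q_{n,j}=\sum_{k\le n}X_{k,j}$ and $R_n=1+\sum_{k\le n}\rho_k$, so $M_{n,j}^*=Q_{n,j}/R_n$, one has the algebraic identity $Q_{n,j}-Z_{n,j}^*R_n=-Z_{n,j}^*+\sum_{k\le n}\zeta_k-\sum_{k\le n}(Z_{n,j}^*-Z_{k-1,j}^*)\rho_k$. Interchanging the order of summation in the last term and using $\sum_{k\le l}\rho_k=R_l-1$ and the increment formula above rewrites it as $\sum_{l\le n}(R_l-1)\,\xi_l/(S_{l-1}^*+W_l)$. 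Since $(R_l-1)/l\to1$ (the number of draws of dominated colors is $o(n)$ a.s., by Lemma \ref{mf65tr}) and $(S_{l-1}^*+W_l)/l\to m$, this equals $\frac1m\sum_{l\le n}\xi_l+o_P(\sqrt n)$; combined with $R_n/n\to1$ this gives $C_{n,j}^*=\frac1{\sqrt n}\sum_{k\le n}(\zeta_k-\xi_k/m)+o_P(1)$. A computation parallel to the previous one, now also invoking $\sum_{i\le d_0}Z_{(i)}=1$ (from Lemma \ref{mf65tr}), shows $\frac1n\sum_{k\le n}E\bigl((\zeta_k-\xi_k/m)^2\mid\mathcal{G}_{k-1}\bigr)\to V_j-Z_{(j)}(1-Z_{(j)})=U_j$, and the ordinary stable martingale central limit theorem gives $C_{n,j}^*\to\mathcal{N}(0,U_j)$ stably. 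Lemma \ref{plm} then yields the joint stable convergence, completing the proof.

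\emph{Main obstacle.} The delicate part is the error control for $C_{n,j}^*$, in particular showing $\frac1{\sqrt n}\sum_{l\le n}\xi_l\bigl((R_l-1)/(S_{l-1}^*+W_l)-1/m\bigr)=o_P(1)$ (and that $n/R_n$ may be replaced by $1$). One splits the coefficient into a $\mathcal{G}_{l-1}$-measurable part, tending to $0$ a.s., for which a truncation argument — the coefficients converging only pathwise — controls $\frac1n\sum_{l\le n}E((\text{coefficient})^2\xi_l^2\mid\mathcal{G}_{l-1})\to0$, plus an $O(1/l)$ remainder summing to $O(\log n)$. These estimates, the $L^2$-convergence of the martingale $\sum_l\xi_l/S_{l-1}^*$, and moment bounds for $n/S_n^*$ needed in the tail CLT are where Lemma \ref{mf65tr} — especially the rate $n^{1-\lambda}\sum_{i>d_0}Z_{n,i}\to0$ for $\lambda>\lambda_0/m$ — is genuinely used, and these are the computations I expect to confine to the Appendix.
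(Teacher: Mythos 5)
Your proposal is correct and follows essentially the same route as the paper: your martingale tail $-\sqrt{n}\sum_{l>n}\xi_l/S_{l-1}^*$ is exactly the compensated $Z_{n,j}^*$ that the paper feeds into Proposition \ref{beo5t} and Corollary 7 of \cite{CLP}, your array $(\zeta_k-\xi_k/m)/\sqrt{n}$ is the paper's $Y_{n,k}$ with the coefficient $(R_{l-1}-1)/(S_{l-1}^*+W_l)$ already replaced by $1/m$, and the three limiting conditional-variance computations (giving $Z_{(j)}(1-Z_{(j)})$, $m^2V_j$ and the cross term $-2Z_{(j)}(1-Z_{(j)})$) coincide with those in the paper's verification of \eqref{udlq2e} and \eqref{mal12ed}. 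The one ingredient you should make explicit is the localization on $H_n=\{2\,S_n^*\geq n\,m\}$ (with $P(H_n^c\text{ i.o.})=0$), which the paper uses to convert your pathwise bounds on $n/S_n^*$ and $(R_{l-1}-1)/S_{l-1}^*$ into the uniform and $L^1$ bounds actually demanded by the cited CLTs, and the passage via Lemma \ref{hftuimn} from conditional variances to the actual quadratic variation required in condition \eqref{udlq2e} --- both of which you flag as deferred estimates, so this is a matter of detail rather than a gap.
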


As noted in Section \ref{mot56fg}, Theorem \ref{main} has been
thought for the case $d_0<d$, and it reduces to Corollary 9 of
\cite{BCPR} in the particular case $d_0=d$. We also remark that some
assumptions can be stated in a different form. In particular, under
suitable extra conditions, Theorem \ref{main} works even if
$(A_{n,1},\ldots,A_{n,d})$ independent of
$\mathcal{G}_{n-1}\vee\sigma(X_{n,1},\ldots,X_{n,d})$ is weakened
into
\begin{equation*} (A_{n,1},\ldots,A_{n,d})\,\,\text{ conditionally
independent of }\,\,(X_{n,1},\ldots,X_{n,d})\,\,\text{ given
}\,\,\mathcal{G}_{n-1};\end{equation*} see Remark 8 of \cite{BCPR}.

The proof of Theorem \ref{main} is deferred to Section
\ref{proof56tvb8}. Here, we stress a few of its consequences.

We already know (from Section \ref{mot56fg}) that
$(C_{n,j},\,D_{n,j})$ may fail to converge when $d_0<d$. There is a
remarkable exception, however.

\begin{cor}\label{cor560ew3}
Under conditions \eqref{indip}-\eqref{hg5dr}, if $2\,\lambda_0<m$
(that is, \eqref{nuovcon45d} holds) then
\begin{gather*}
C_{n,j}\longrightarrow \mathcal{N}(0,U_j)\text{ stably and }\,
D_{n,j}\longrightarrow
\mathcal{N}(0,V_j)\,\,\,\,\mathcal{G}\text{-stably in strong sense}
\end{gather*}
for each $j\in\{1,\ldots,d_0\}$. In particular (by Lemma \ref{plm}),
\begin{gather*}
(C_{n,j},\,D_{n,j})\longrightarrow\mathcal{N}(0,U_j)\times\mathcal{N}(0,V_j)
\text{ stably}.
\end{gather*}
\end{cor}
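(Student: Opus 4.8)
The plan is to deduce Corollary \ref{cor560ew3} from Theorem \ref{main} by showing that, under the extra assumption $2\lambda_0<m$, the starred quantities $C_{n,j}^*$ and $D_{n,j}^*$ are asymptotically equivalent (in the appropriate senses) to their unstarred counterparts $C_{n,j}$ and $D_{n,j}$. Concretely, it suffices to prove that $\sqrt n\,(D_{n,j}^*-D_{n,j})\overset{P}\longrightarrow 0$ and $\sqrt n\,(C_{n,j}^*-C_{n,j})\overset{P}\longrightarrow 0$; in fact I would aim for almost sure convergence of the first difference to $0$, since $D_{n,j}^*$ converges $\mathcal{G}$-stably in strong sense and one needs the perturbation to vanish $\mathcal G$-conditionally as well (here I would invoke that an a.s.\ vanishing $\mathcal G_n$-measurable perturbation does not affect $\mathcal G$-stable convergence in strong sense — $D_{n,j}$ differs from $D_{n,j}^*$ by a $\mathcal G_n$-measurable quantity tending to $0$ a.s.). Then $C_{n,j}\to\mathcal N(0,U_j)$ stably and $D_{n,j}\to\mathcal N(0,V_j)$ $\mathcal G$-stably in strong sense follow, and the joint statement follows from Lemma \ref{plm} exactly as in Theorem \ref{main}, since $\sigma(C_{n,j})\subset\mathcal G_n$ and $\sigma(D_{n,j})\subset\mathcal G_\infty$.

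The key computation is to estimate the two differences. Writing $S_n^*=\sum_{i\le d_0}N_{n,i}$ and $S_n=\sum_{i\le d}N_{n,i}$, and $R_n=\sum_{i=d_0+1}^d N_{n,i}=S_n-S_n^*\ge 0$, one has
\begin{equation*}
Z_{n,j}-Z_{n,j}^*=\frac{N_{n,j}}{S_n}-\frac{N_{n,j}}{S_n^*}=-\,\frac{N_{n,j}\,R_n}{S_n S_n^*}=-\,Z_{n,j}\,\frac{R_n}{S_n},
\end{equation*}
so that $D_{n,j}^*-D_{n,j}=\sqrt n\,(Z_{n,j}^*-Z_{n,j})=Z_{n,j}\,\sqrt n\,\tfrac{R_n}{S_n}=Z_{n,j}\,\sqrt n\,\sum_{i=d_0+1}^d Z_{n,i}$. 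By Lemma \ref{mf65tr}, under $2\lambda_0<m$ we may pick $\lambda$ with $\tfrac{\lambda_0}{m}<\lambda<\tfrac12$, whence $n^{1-\lambda}\sum_{i>d_0}Z_{n,i}\to 0$ a.s.; since $1-\lambda>\tfrac12$, this gives $\sqrt n\sum_{i>d_0}Z_{n,i}\to 0$ a.s., and as $Z_{n,j}\le 1$ we get $D_{n,j}^*-D_{n,j}\to 0$ a.s. For the $C$-difference, note $C_{n,j}^*-C_{n,j}=\sqrt n\,(M_{n,j}^*-M_{n,j})-(D_{n,j}^*-D_{n,j})$; the second term has just been handled, and for the first, with $T_n=\sum_{k\le n}\sum_{i\le d_0}X_{k,i}$ and $T_n'=\sum_{k\le n}\sum_{i>d_0}X_{k,i}=n-T_n$, one has $M_{n,j}^*-M_{n,j}=\sum_{k\le n}X_{k,j}\,\bigl(\tfrac1{1+T_n}-\tfrac1n\bigr)=\tfrac{\sum_{k\le n}X_{k,j}}{n}\cdot\tfrac{n-1-T_n}{1+T_n}=M_{n,j}\cdot\tfrac{T_n'-1}{1+T_n}$, so $\sqrt n\,(M_{n,j}^*-M_{n,j})$ is controlled by $\tfrac{1}{\sqrt n}+\tfrac{\sqrt n\,T_n'}{n+1}\cdot\tfrac{n}{1+T_n}$. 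Here $T_n/n\to\sum_{i\le d_0}Z_{(i)}>0$ a.s.\ (from $M_{n,i}\to Z_{(i)}$, which in turn follows from Lemma \ref{hftuimn} applied to $X_{k,i}$ together with $Z_{n,i}\to Z_{(i)}$), while $T_n'/n=\sum_{i>d_0}M_{n,i}\to 0$ a.s.; what is actually needed is $T_n'/\sqrt n\to 0$, which again follows by applying Lemma \ref{hftuimn} to $Y_k=\sqrt k\sum_{i>d_0}X_{k,i}$ after checking $\sum_k EY_k^2/k^2<\infty$ and $E(Y_{k+1}\mid\mathcal G_k)=\sqrt{k+1}\sum_{i>d_0}Z_{k,i}\to 0$ a.s.\ (the latter from Lemma \ref{mf65tr}, since $\sqrt{k+1}\sum_{i>d_0}Z_{k,i}\le k^{1-\lambda}\sum_{i>d_0}Z_{k,i}\to0$ for large $k$ as $\lambda<\tfrac12$). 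Hence $\sqrt n\,(M_{n,j}^*-M_{n,j})\to 0$ a.s., and therefore $C_{n,j}^*-C_{n,j}\to 0$ a.s.

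Putting these together: $C_{n,j}=C_{n,j}^*-(C_{n,j}^*-C_{n,j})$ with the correction term $\to0$ a.s., so $C_{n,j}\to\mathcal N(0,U_j)$ stably (stable limits are preserved under a.s.\ perturbation by Slutsky-type arguments for stable convergence); and $D_{n,j}=D_{n,j}^*-(D_{n,j}^*-D_{n,j})$ with the correction $\mathcal G_n$-measurable and $\to0$ a.s., so $D_{n,j}\to\mathcal N(0,V_j)$ $\mathcal G$-stably in strong sense (the conditional expectation $E(f(D_{n,j})\mid\mathcal G_n)$ differs from $E(f(D_{n,j}^*)\mid\mathcal G_n)$ by a term that vanishes in probability by uniform continuity of bounded continuous $f$ on the relevant range, using that the perturbation is $\mathcal G_n$-measurable). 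Lemma \ref{plm} then yields the joint stable convergence, completing the proof. The main obstacle I anticipate is the rate estimate $T_n'/\sqrt n\to0$, i.e.\ controlling how fast the dominated colors are drawn: this is where the strict inequality $2\lambda_0<m$ is essential (it guarantees $\lambda_0/m<1/2$, leaving room for the exponent $\lambda$), and it requires combining Lemma \ref{mf65tr}'s rate $n^{1-\lambda}\sum_{i>d_0}Z_{n,i}\to0$ with Lemma \ref{hftuimn} to pass from the conditional probabilities $Z_{n,i}$ to the empirical counts $\sum_{k\le n}X_{k,i}$; everything else is routine algebra and standard stability facts.
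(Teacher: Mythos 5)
Your overall strategy is the paper's: reduce to $D_{n,j}^*-D_{n,j}\to 0$ and $C_{n,j}^*-C_{n,j}\to 0$, kill the $D$-difference with the rate $n^{1-\lambda}\sum_{i>d_0}Z_{n,i}\overset{a.s.}\to 0$ for some $\lambda<\tfrac12$ (which is exactly where $2\lambda_0<m$ enters), and reduce the $C$-difference to showing that the dominated colors are drawn $o(\sqrt n)$ times. (Two harmless remarks: your identity should read $Z_{n,j}-Z_{n,j}^*=-Z_{n,j}\,R_n/S_n^*$, not $-Z_{n,j}\,R_n/S_n$, but the extra factor $S_n/S_n^*\to 1$ a.s.\ is irrelevant; and your worry about needing a.s., $\mathcal G_n$-measurable perturbations for strong-sense stability is over-cautious --- convergence in probability of the perturbation already suffices, since for bounded uniformly continuous $f$ one has $E\abs{f(Y_n+\epsilon_n)-f(Y_n)}\to 0$ and hence $E\bigl(f(Y_n+\epsilon_n)\mid\mathcal G_n\bigr)-E\bigl(f(Y_n)\mid\mathcal G_n\bigr)\to 0$ in $L^1$.)

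The genuine gap is in the step you yourself flag as the main obstacle, $T_n'/\sqrt n\to 0$. You propose to apply Lemma \ref{hftuimn} to $Y_k=\sqrt k\sum_{i>d_0}X_{k,i}$ ``after checking $\sum_k EY_k^2/k^2<\infty$'', but that check is not a formality and does not go through with the tools available. Since $\bigl(\sum_{i>d_0}X_{k,i}\bigr)^2=\sum_{i>d_0}X_{k,i}$, one has $\sum_k EY_k^2/k^2=\sum_k k^{-1}E\bigl[\sum_{i>d_0}Z_{k-1,i}\bigr]$, so you need a polynomial decay rate for $E[Z_{k,i}]$, $i>d_0$, \emph{in expectation}. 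Lemma \ref{mf65tr} only gives the almost sure rate $n^{1-\lambda}\sum_{i>d_0}Z_{n,i}\to 0$; this random variable is not bounded (it can be as large as $n^{1-\lambda}$), so dominated convergence does not transfer the rate to $E[Z_{n,i}]$, and bounding $E[N_{n,i}/S_n]$ would require a quantitative lower bound on $S_n$ of order $n$ that holds in a moment sense --- nothing of the sort is established. The paper circumvents precisely this difficulty by working conditionally: it sets $L_{n,i}=\sum_{k\le n}(X_{k,i}-Z_{k-1,i})/\sqrt k$, notes that $\sum_n E\bigl\{(L_{n+1,i}-L_{n,i})^2\mid\mathcal G_n\bigr\}=\sum_n Z_{n,i}(1-Z_{n,i})/(n+1)<\infty$ a.s.\ thanks to the a.s.\ rate alone, concludes that $L_{n,i}$ converges a.s.\ (martingale with a.s.\ finite conditional variance sum), and then applies Kronecker's lemma together with $\frac1{\sqrt n}\sum_{k\le n}Z_{k,i}\to 0$ a.s.\ to get $\sqrt n\,M_{n,i}\to 0$ a.s.\ for $i>d_0$. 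Replacing your application of Lemma \ref{hftuimn} at this point by that conditional-variance argument repairs the proof; the rest of your argument is sound.
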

\begin{proof}
By Theorem \ref{main}, it is enough to prove
$D_{n,j}^*-D_{n,j}\overset{P}\longrightarrow 0$ and
$C_{n,j}^*-C_{n,j}\overset{P}\longrightarrow 0$. It can be assumed
$d_0<d$. Note that
\begin{gather*}
\Abs{D_{n,j}^*-D_{n,j}}=\sqrt{n}\,Z_{n,j}\,\bigl(\frac{S_n}{S_n^*}-1\bigr)\leq\frac{S_n}{S_n^*}\,\,\sqrt{n}\sum_{i=d_0+1}^dZ_{n,i},
\\C_{n,j}^*-C_{n,j}=D_{n,j}-D_{n,j}^*\,+\,M_{n,j}\,\sqrt{n}\,\,\frac{\sum_{i=d_0+1}^dM_{n,i}-\frac{1}{n}}{\frac{1}{n}+\sum_{i=1}^{d_0}M_{n,i}}.
\end{gather*}
By Lemma \ref{mf65tr} and $2\,\lambda_0<m$, there is
$\alpha>\frac{1}{2}$ such that
$n^{\alpha}\sum_{i=d_0+1}^dZ_{n,i}\overset{a.s.}\longrightarrow 0$.
Thus, it remains only to see that
$\sqrt{n}\,M_{n,i}\overset{a.s.}\longrightarrow 0$ for each $i>d_0$.
Fix $i>d_0$ and define
$L_{n,i}=\sum_{k=1}^n\frac{X_{k,i}-Z_{k-1,i}}{\sqrt{k}}$. Since
$(L_{n,i}:n\geq 1)$ is a $\mathcal{G}$-martingale and
\begin{equation*}
\sum_nE\bigl\{(L_{n+1,i}-L_{n,i})^2\mid\mathcal{G}_n\bigr\}=\sum_n\frac{Z_{n,i}(1-Z_{n,i})}{n+1}\leq
\sum_n\frac{n^\alpha Z_{n,i}}{n^{1+\alpha}}<\infty\quad\text{a.s.},
\end{equation*}
then $L_{n,i}$ converges a.s.. By Kronecker lemma,
\begin{equation*}
\frac{1}{\sqrt{n}}\,\sum_{k=1}^{n}(X_{k,i}-Z_{k-1,i})=\frac{1}{\sqrt{n}}\,\sum_{k=1}^{n}\sqrt{k}\,\frac{X_{k,i}-Z_{k-1,i}}{\sqrt{k}}\,\overset{a.s.}\longrightarrow\,
0.
\end{equation*}
Since $\frac{1}{\sqrt{n}}\,\sum_{k=1}^{n}k^{-\alpha}\longrightarrow
0$ and $Z_{k,i}=\,$o$(k^{-\alpha})$ a.s., it follows that
\begin{gather*}
\sqrt{n}\,M_{n,i}=\frac{1}{\sqrt{n}}\,\sum_{k=1}^n(X_{k,i}-Z_{k-1,i})\,+\,\frac{1}{\sqrt{n}}\,\sum_{k=0}^{n-1}Z_{k,i}\,
\overset{a.s.}\longrightarrow\, 0.
\end{gather*}
\end{proof}

Theorem \ref{main} has some statistical implications as well.

\begin{ex}\label{ex1} {\bf (A statistical use of
$D_{n,j}^*$).} Suppose $d_0>1$, conditions
\eqref{indip}-\eqref{hg5dr} hold, and fix $j\leq d_0$. Let
$(V_{n,j}:n\geq 1)$ be a sequence of consistent estimators of $V_j$,
that is, $V_{n,j}\overset{P}\longrightarrow V_j$ and
$\sigma(V_{n,j})\subset\mathcal{D}_n$ for each $n$ where
\begin{equation*}
\mathcal{D}_n=\sigma\bigl(X_{k,i}A_{k,i},\,X_{k,i}:1\leq k\leq
n,\,1\leq i\leq d\bigr)
\end{equation*}
is the $\sigma$-field corresponding to the "available data". Since
$(V_{n,j})$ is $\mathcal{G}$-adapted, Theorem \ref{main} yields
\begin{equation*}
(D_{n,j}^*,\,V_{n,j})\longrightarrow\mathcal{N}(0,V_j)\times\delta_{V_j}\quad\mathcal{G}\text{-stably
in strong sense}.
\end{equation*}
Since $d_0>1$, then $0<Z_{(j)}<1$ a.s., or equivalently $V_j>0$
a.s.. Hence,
\begin{gather*}
I_{\{V_{n,j}>0\}}\,\frac{D_{n,j}^*}{\sqrt{V_{n,j}}}\longrightarrow\mathcal{N}(0,1)\quad\mathcal{G}\text{-stably
in strong sense}.
\end{gather*}
For large $n$, this fact allows to make inference on $Z_{(j)}$. For
instance,
\begin{equation*}
Z_{n,j}^*\pm\frac{u_\alpha}{\sqrt{n}}\,\sqrt{V_{n,j}}
\end{equation*}
provides an asymptotic confidence interval for $Z_{(j)}$ with
(approximate) level $1-\alpha$, where $u_\alpha$ is such that
$\mathcal{N}(0,1)(u_\alpha,\,\infty)=\frac{\alpha}{2}$.

An obvious consistent estimator of $V_j$ is
\begin{gather*}
V_{n,j}=\frac{1}{m_n^2}\,\bigl\{\,Q_{n,j}\,(1-Z_{n,j})^2\,+\,Z_{n,j}^2\sum_{i\leq
d_0,i\neq j}Q_{n,i}\,\bigr\}\quad\text{where}
\\m_n=\frac{\sum_{k=1}^n\sum_{i=1}^dX_{k,i}A_{k,i}}{n}\,\text{
and }\,Q_{n,i}=\frac{\sum_{k=1}^nX_{k,i}A_{k,i}^2}{n}.\notag
\end{gather*}
In fact,
$E(X_{n+1,i}A_{n+1,i}^2\mid\mathcal{G}_n)=Z_{n,i}\,EA_{n+,i}^2\overset{a.s.}\longrightarrow
Z_{(i)}\,q_i$ for all $i\leq d_0$, so that Lemma \ref{hftuimn}
implies $Q_{n,i}\overset{a.s.}\longrightarrow Z_{(i)}\,q_i$.
Similarly, $m_n\overset{a.s.}\longrightarrow m$. Therefore,
$V_{n,j}\overset{a.s.}\longrightarrow V_j$.

Finally, Theorem \ref{main} also implies
$\sqrt{n}\,\bigl(M_{n,j}^*-Z_{(j)}\bigr)=C_{n,j}^*+D_{n,j}^*\longrightarrow\mathcal{N}(0,U_j+V_j)$
stably. So, another asymptotic confidence interval for $Z_{(j)}$ is
$M_{n,j}^*\pm\frac{u_\alpha}{\sqrt{n}}\,\sqrt{G_{n,j}}$ where
$G_{n,j}$ is a consistent estimator of $U_j+V_j$. One merit of the
latter interval is that it does not depend on the initial
composition $a_i$, $i=1,\ldots,d_0$ (provided this is true for
$G_{n,j}$ as well).

\end{ex}

\begin{ex}\label{ex2} {\bf (A statistical use of
$C_{n,j}^*$).} Suppose
\begin{equation*}
EA_{n,j}=\mu_j\,\text{ and
}\,\text{var}(A_{n,j})=\sigma_j^2>0\,\text{ for all }n\geq 1\text{
and }1\leq j\leq d.
\end{equation*}
Suppose also that conditions \eqref{indip}-\eqref{hg5dr} hold with
some $J\subset\{1,\ldots,d\}$ in the place of $\{1,\ldots,d_0\}$,
where card$(J)>1$, that is
\begin{gather*}
\mu_r=m>\mu_s\quad\text{whenever }r\in J\text{ and }s\notin J.
\end{gather*}
Both $J$ and card$(J)$ are unknown, and we aim to test the
hypothesis $H_0:J=J^*$ where $J^*\subset\{1,\ldots,d\}$ and
card$(J^*)>1$. Note that $U_j$ can be written as
\begin{equation*}
U_j=\frac{Z_{(j)}}{m^2}\,\bigl\{(1-Z_{(j)})^2\sigma_j^2+Z_{(j)}\sum_{i\in
J,i\neq j}Z_{(i)}\,\sigma_i^2\bigr\},\quad j\in J.
\end{equation*}
Fix $j\in J^*$. Under $H_0$, a consistent estimator of $U_j$ is
\begin{gather*}
U_{n,j}=\frac{Z_{n,j}}{\widehat{m}_n^2}\,\bigl\{(1-Z_{n,j})^2\widehat{\sigma}_{n,j}^2+Z_{n,j}\sum_{i\in
J^*,i\neq
j}Z_{n,i}\,\widehat{\sigma}_{n,i}^2\bigr\}\quad\text{where}
\\\widehat{m}_n=\frac{1}{\text{card}(J^*)}\sum_{i\in J^*}\widehat{m}_{n,i},\,\,
\widehat{m}_{n,i}=\frac{\sum_{k=1}^nX_{k,i}A_{k,i}}{\sum_{k=1}^nX_{k,i}},\,\,\widehat{\sigma}_{n,i}^2=\frac{\sum_{k=1}^nX_{k,i}\bigl(A_{k,i}-\widehat{m}_{n,i})^2}{\sum_{k=1}^nX_{k,i}}.
\end{gather*}
Note that $\sum_{k=1}^nX_{k,i}>0$, eventually a.s., so that
$\widehat{m}_{n,i}$ and $\widehat{\sigma}_{n,i}^2$ are well defined.
Similarly $\widehat{m}_n>0$, eventually a.s., so that $U_{n,j}$ is
well defined. Next, defining $C_{n,j}^*$ in the obvious way (i.e.,
with $J^*$ in the place of $\{1,\ldots,d_0\}$), Theorem \ref{main}
implies
\begin{gather*}
K_{n,j}:=I_{\{U_{n,j}>0\}}\,\frac{C_{n,j}^*}{\sqrt{U_{n,j}}}\longrightarrow\mathcal{N}(0,1)\quad\text{stably
under }H_0.
\end{gather*}
The converse is true as well, i.e., $K_{n,j}$ fails to converge in
distribution to $\mathcal{N}(0,1)$ when $H_0$ is false. (This can be
proved arguing as in Remark \ref{h6t98d34s}; we omit a formal
proof). Thus, an asymptotic critical region for $H_0$, with
approximate level $\alpha$, is $\bigl\{\abs{K_{n,j}}\geq
u_\alpha\bigr\}$ with $u_\alpha$ satisfying
$\mathcal{N}(0,1)(u_\alpha,\,\infty)=\frac{\alpha}{2}$. In real
problems, sometimes, it is known in advance that $j_0\in J$ for some
$j_0\in J^*$. Then, $j=j_0$ is a natural choice in the previous
test. Otherwise, an alternative option is a critical region of the
type $\bigcup_{i\in J^*}\bigl\{\abs{K_{n,i}}\geq u_i\bigr\}$ for
suitable $u_i$. This results in a more powerful test but requires
the joint limit distribution of $\bigl(K_{n,i}:i\in J^*\bigr)$ under
$H_0$. Such a distribution is given in \cite{BCPR} when
$J^*=\{1,\ldots,d\}$, and can be easily obtained for arbitrary $J^*$
using the techniques of this paper.

\end{ex}

\begin{ex}\label{ex3} {\bf (Another statistical use of
$C_{n,j}^*$).} As in Example \ref{ex2} (and under the same
assumptions), we aim to test $H_0:J=J^*$. Contrary to Example
\ref{ex2}, however, we are given observations $A_{k,j}$, $1\leq
k\leq n$, $1\leq j\leq d$, but no urn is explicitly assigned. This
is a main problem in statistical inference, usually faced by the
ANOVA techniques and their very many ramifications. A solution to
this problem is using $C_{n,j}^*$, as in Example \ref{ex2}, after
{\em simulating }the $X_{n,j}$. The simulation is not hard. Take in
fact an i.i.d. sequence $(Y_n:n\geq 0)$, independent of the
$A_{k,j}$, with $Y_0$ uniformly distributed on $(0,1)$. Let $a_i=1$,
$Z_{0,i}=\frac{1}{d}$ for $i=1,\dots,d$, and
\begin{equation*}
X_{1,j}=I_{\{F_{0,j-1}<Y_0\leq F_{0,j}\}}\quad\text{where
}\,F_{0,j}=\sum_{i=1}^jZ_{0,i}\text{ and }F_{0,0}=0.
\end{equation*}
By induction, for each $n\geq 1$,
\begin{gather*}
X_{n+1,j}=I_{\{F_{n,j-1}<Y_n\leq F_{n,j}\}}\quad\text{where
}\,F_{n,j}=\sum_{i=1}^jZ_{n,i},
\\F_{n,0}=0\,\text{ and }\,Z_{n,i}=\frac{1+\sum_{k=1}^nX_{k,i}A_{k,i}}{d+\sum_{r=1}^d\sum_{k=1}^nX_{k,r}A_{k,r}}.
\end{gather*}
Now, $H_0$ can be asymptotically tested as in Example \ref{ex2}. In
addition, since $A_{k,i}$ is actually observed (unlike Example
\ref{ex2}, where only $X_{k,i}A_{k,i}$ is observed),
$\widehat{m}_{n,i}$ and $\widehat{\sigma}_{n,i}^2$ can be taken as
\begin{gather*}
\widehat{m}_{n,i}=\frac{\sum_{k=1}^nA_{k,i}}{n}\quad\text{and}\quad
\widehat{\sigma}_{n,i}^2=\frac{\sum_{k=1}^n\bigl(A_{k,i}-\widehat{m}_{n,i})^2}{n}.
\end{gather*}

Clearly, this procedure needs to be much developed and investigated.
By now, however, it looks (to us) potentially fruitful.

\end{ex}

\section{Proof of Theorem \ref{main}}\label{proof56tvb8}

Next result, of possible independent interest, is inspired by ideas
in \cite{BCPR} and \cite{CLP}.

\begin{prop}\label{beo5t}
Let $\mathcal{F}=(\mathcal{F}_n)$ be an increasing filtration and
$(Y_n)$ an $\mathcal{F}$-adapted sequence of real integrable random
variables. Suppose $Y_n\overset{a.s.}\longrightarrow Y$ for some
random variable $Y$ and $H_n\in\mathcal{F}_n$ are events satisfying
$P(H_n^c$ i.o.$)=0$. Then,
\begin{gather*}
\sqrt{n}\,(Y_n-Y)\longrightarrow\mathcal{N}(0,V)\quad\mathcal{F}\text{-stably
in strong sense},
\end{gather*}
for some random variable $V$, whenever
\begin{equation}\label{basicnew}
E\bigl\{I_{H_n}\,\bigl(E(Y_{n+1}\mid\mathcal{F}_n)-Y_n\bigr)^2\bigr\}=\text{o}(n^{-3}),
\end{equation}
\begin{equation}\label{f6y8j0w}
\sqrt{n}\,E\bigl\{I_{H_n}\,\sup_{k\geq
n}\,\abs{E(Y_{k+1}\mid\mathcal{F}_k)-Y_{k+1}}\bigr\}\longrightarrow
0,
\end{equation}
\begin{equation}\label{udlq2e}
n\,\sum_{k\geq n}(Y_k-Y_{k+1})^2\overset{P}\longrightarrow V.
\end{equation}
\end{prop}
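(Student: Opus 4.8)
The plan is to realize $\sqrt{n}\,(Y_n-Y)$ as (up to a negligible remainder) a martingale array and apply the stable martingale CLT. First I would write the telescoping identity
\begin{equation*}
Y-Y_n=\sum_{k\geq n}(Y_{k+1}-Y_k),
\end{equation*}
valid a.s.\ since $Y_n\to Y$. Splitting each increment into its predictable and martingale parts relative to $\mathcal{F}$, set $R_{k+1}=E(Y_{k+1}\mid\mathcal{F}_k)-Y_k$ and $\Delta_{k+1}=Y_{k+1}-E(Y_{k+1}\mid\mathcal{F}_k)$, so that $Y-Y_n=\sum_{k\geq n}R_{k+1}+\sum_{k\geq n}\Delta_{k+1}$. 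Multiplying by $\sqrt{n}$, the idea is that the first sum is negligible in $L^1$ on $H_n$ by hypothesis \eqref{basicnew} (Cauchy–Schwarz gives $\sqrt{n}\,E\{I_{H_n}\sum_{k\geq n}|R_{k+1}|\}\leq \sqrt{n}\sum_{k\geq n}(E I_{H_k}R_{k+1}^2)^{1/2}$ after enlarging $H_n$ downward — here one uses $P(H_n^c\text{ i.o.})=0$ to pass from $I_{H_n}$ to $I_{H_k}$ for $k\geq n$ at the cost of a vanishing error — and $o(k^{-3/2})$ is summable against $\sqrt{n}$), while the tail martingale $T_n:=\sum_{k\geq n}\Delta_{k+1}$ is the term carrying the Gaussian limit.

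The core step is the stable CLT for $\sqrt{n}\,T_n$. I would view $(T_n)$ as a reverse martingale, or equivalently reindex so that the relevant array of increments is $\xi_{n,k}=\sqrt{n}\,\Delta_{k+1}$ for $k\geq n$, which is a martingale-difference array with respect to $(\mathcal{F}_k)_{k\geq n}$. Its conditional (predictable) quadratic variation is
\begin{equation*}
\sum_{k\geq n} n\,E(\Delta_{k+1}^2\mid\mathcal{F}_k),
\end{equation*}
and I would show this converges in probability to the same limit $V$ as in \eqref{udlq2e}: indeed $E(\Delta_{k+1}^2\mid\mathcal{F}_k)=E\{(Y_{k+1}-Y_k)^2\mid\mathcal{F}_k\}-R_{k+1}^2$, and $n\sum_{k\geq n}(Y_k-Y_{k+1})^2\overset{P}\to V$ by \eqref{udlq2e}, while the $R_{k+1}^2$ contribution and the difference between $\sum(Y_k-Y_{k+1})^2$ and its conditional-expectation version vanish using \eqref{basicnew} together with a Lemma~\ref{hftuimn}-type argument (or a direct $L^1$ estimate). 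The conditional Lindeberg condition follows because the individual increments are uniformly small: $|\Delta_{k+1}|\leq |Y_{k+1}-Y_k|+|R_{k+1}|$, and $n\sup_{k\geq n}(Y_k-Y_{k+1})^2\to 0$ in probability as a consequence of \eqref{udlq2e} (tails of a convergent-in-probability series of nonnegative terms). Hypothesis \eqref{f6y8j0w} is what controls, in $L^1$ on $H_n$, the crude bound $\sqrt{n}\sup_{k\geq n}|R_{k+1}-\Delta_{k+1}|$ ... more precisely it is the clean way to dominate $\sqrt{n}\sum_{k\geq n}$ of the increments uniformly and thereby both justify the interchange of limit and summation and verify Lindeberg; I would use it to bound $\sqrt n\,E\{I_{H_n}\sup_{k\ge n}|R_{k+1}|\}$ directly after writing $R_{k+1}=E(Y_{k+1}\mid\mathcal F_k)-Y_{k+1}+(Y_{k+1}-Y_k)$ and absorbing the telescoped part.

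Assembling: on $H_n$ (hence eventually a.s.), $\sqrt{n}\,(Y_n-Y)=-\sqrt{n}\,T_n+o_{L^1}(1)$, and $-\sqrt{n}\,T_n\to\mathcal{N}(0,V)$ $\mathcal{F}$-stably in the strong sense by the stable/nested martingale CLT (Aldous–Eagleson/Crimaldi–Letta–Pratelli version, which delivers precisely convergence of $E(f(\cdot)\mid\mathcal{F}_n)$ in probability); adding back the negligible term preserves this. Defining $V$ by \eqref{udlq2e} closes the loop. The main obstacle I anticipate is the bookkeeping around the events $H_n$: all the quantitative hypotheses are weighted by $I_{H_n}$, the martingale CLT machinery wants unconditional control, and the bridge is $P(H_n^c\text{ i.o.})=0$, so I must repeatedly (i) replace $I_{H_n}$ by $I_{H_k}$ for $k\ge n$ inside tail sums, incurring errors governed by $P(\bigcup_{k\ge n}H_k^c)\to 0$, and (ii) argue that conclusions holding ``on $H_n$ eventually'' upgrade to a.s.\ statements. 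Making this interplay rigorous — especially reconciling the $L^1$-type bounds \eqref{basicnew}–\eqref{f6y8j0w} with the in-probability bound \eqref{udlq2e} and the in-probability mode of strong stable convergence — is where the real care is needed; everything else is a standard, if lengthy, application of the martingale CLT.
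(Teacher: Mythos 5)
Your overall architecture is the paper's: Doob-decompose $Y_n$ into the martingale $L_n=Y_n-\sum_{k<n}T_k$ (with $T_k=E(Y_{k+1}\mid\mathcal F_k)-Y_k$, your $R_{k+1}$), kill the compensator tail $\sqrt n\sum_{k\ge n}T_k$ via \eqref{basicnew} and Cauchy--Schwarz exactly as you indicate, and conclude by a stable CLT for the tail of a convergent martingale. However, the core step as you have written it has a genuine gap. You commit to proving that the \emph{predictable} quadratic variation $n\sum_{k\ge n}E(\Delta_{k+1}^2\mid\mathcal F_k)$ converges to $V$, asserting that its gap with the realized sum closes ``by a Lemma \ref{hftuimn}-type argument or a direct $L^1$ estimate''. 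Neither is available at this level of generality: the $Y_n$ are only integrable, \eqref{udlq2e} is only convergence in probability, and there is no moment bound on $(Y_{k+1}-Y_k)^2$, so $n\sum_{k\ge n}\bigl\{\Delta_{k+1}^2-E(\Delta_{k+1}^2\mid\mathcal F_k)\bigr\}$ cannot be shown to vanish. The conversion is also unnecessary: the result the paper invokes (a consequence of Corollary 7 of \cite{CLP}, tailored to $\sqrt n\,(L_n-L)$ and delivering the strong form of stable convergence) is stated with the \emph{realized} quadratic variation $n\sum_{k\ge n}(L_k-L_{k+1})^2\overset{P}\longrightarrow V$, which follows directly from \eqref{udlq2e} and \eqref{basicnew} by expanding $(L_k-L_{k+1})^2=(Y_k-Y_{k+1})^2+2\,T_k(Y_k-Y_{k+1})+T_k^2$ and using Cauchy--Schwarz on the cross term. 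A forward martingale-array CLT with conditional Lindeberg is not the right tool for the infinite tail sums $\sum_{k\ge n}$; the tail-martingale form you name parenthetically is what actually carries the proof, and its two hypotheses are the ones to verify.

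A second, related misstep is your assignment of \eqref{f6y8j0w}: you propose to use it to bound $\sqrt n\,E\{I_{H_n}\sup_{k\ge n}|R_{k+1}|\}$, i.e.\ the compensator. But $L_k-L_{k+1}=E(Y_{k+1}\mid\mathcal F_k)-Y_{k+1}=-\Delta_{k+1}$, so \eqref{f6y8j0w} is verbatim the sup condition on the \emph{martingale} increments (the Lindeberg surrogate in the CLP theorem); the compensator is handled by \eqref{basicnew} alone. As written, your plan leaves that sup condition unverified, and your proposed substitute --- that $n\sup_{k\ge n}(Y_k-Y_{k+1})^2\overset{P}\longrightarrow 0$ follows from \eqref{udlq2e} --- is both weaker than what is needed (an $L^1$ bound on $\sqrt n\,\sup_{k\ge n}|\Delta_{k+1}|$ over $H_n$) and not itself a consequence of convergence in probability of the tail sums. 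Both gaps are repairable by realigning the three hypotheses with the two conditions of the tail-martingale CLT, which is precisely what the paper's proof does.
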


\begin{proof}
We base on the following result, which is a consequence of Corollary
7 of \cite{CLP}. {\em Let $(L_n)$ be an $\mathcal{F}$-martingale
such that $L_n\overset{a.s.}\longrightarrow L$. Then,
$\sqrt{n}\,(L_n-L)\longrightarrow\mathcal{N}(0,V)$
$\mathcal{F}$-stably in strong sense whenever}
\begin{equation*}
\text{{\bf (i)}}\quad\lim_n\sqrt{n}\,E\bigl\{I_{H_n}\,\sup_{k\geq
n}\,\abs{L_k-L_{k+1}}\bigr\}=0;\quad \text{{\bf (ii)}}\quad
n\sum_{k\geq n}(L_k-L_{k+1})^2\overset{P}\longrightarrow V.
\end{equation*}

Next, define the $\mathcal{F}$-martingale
\begin{equation*}
L_0=Y_0,\quad
L_n=Y_n-\sum_{k=0}^{n-1}E\bigl(Y_{k+1}-Y_k\mid\mathcal{F}_k\bigr).
\end{equation*}
Define also $T_n=E\bigl(Y_{n+1}-Y_n\mid\mathcal{F}_n\bigr)$. By
\eqref{basicnew},
\begin{gather}\label{p087bnh65r}
\sqrt{n}\,\sum_{k\geq n}E\abs{I_{H_k}\,T_k}\leq\sqrt{n}\,\sum_{k\geq
n}\sqrt{E(I_{H_k}\,T_k^2)}=\sqrt{n}\,\sum_{k\geq
n}\text{o}(k^{-3/2})\longrightarrow 0.
\end{gather}
In particular, $\sum_{k=0}^\infty E\abs{I_{H_k}\,T_k}<\infty$ so
that $\sum_{k=0}^{n-1}I_{H_k}\,T_k$ converges a.s.. Since $Y_n$
converges a.s. and $P(I_{H_n}\neq 1$ i.o.$)=0$,
\begin{equation*}
L_n=Y_n-\sum_{k=0}^{n-1}T_k\overset{a.s.}\longrightarrow
L\quad\text{for some random variable }L.
\end{equation*}

Next, write
\begin{equation*}
(L_n-L)-(Y_n-Y)=\sum_{k\geq n}(L_k-L_{k+1})-\sum_{k\geq
n}(Y_k-Y_{k+1})=\sum_{k\geq n}T_k.
\end{equation*}
Recalling $\sqrt{n}\,\sum_{k\geq
n}\abs{I_{H_k}\,T_k}\overset{P}\longrightarrow 0$ (thanks to
\eqref{p087bnh65r}), one obtains
\begin{gather*}
\Abs{\sqrt{n}\,(L_n-L)-\sqrt{n}\,(Y_n-Y)}=\sqrt{n}\,\Abs{\sum_{k\geq
n}T_k}
\\\leq\sqrt{n}\,\sum_{k\geq
n}\abs{I_{H_k}\,T_k}+\sqrt{n}\,\sum_{k\geq
n}\abs{(1-I_{H_k})\,T_k}\overset{P}\longrightarrow 0.
\end{gather*}
Thus, it suffices to prove
$\sqrt{n}\,(L_n-L)\longrightarrow\mathcal{N}(0,V)$
$\mathcal{F}$-stably in strong sense, that is, to prove conditions
(i) and (ii). Condition (i) reduces to \eqref{f6y8j0w} after noting
that $L_k-L_{k+1}=E(Y_{k+1}\mid\mathcal{F}_k)-Y_{k+1}$.

As to (ii), since $L_k-L_{k+1}=Y_k-Y_{k+1}+T_k$, condition
\eqref{udlq2e} yields
\begin{gather*}
n\sum_{k\geq n}(L_k-L_{k+1})^2=V+\,n\sum_{k\geq
n}\bigl\{T_k^2+2\,T_k(Y_k-Y_{k+1})\bigr\}+\text{o}_P(1).
\end{gather*}
By \eqref{basicnew}, $E\bigl\{n\sum_{k\geq
n}I_{H_k}T_k^2\bigr\}=n\sum_{k\geq n}$o$(k^{-3})\longrightarrow 0$.
Since $P(I_{H_n}\neq 1$ i.o.$)=0$, then $n\sum_{k\geq
n}T_k^2\overset{P}\longrightarrow 0$. Because of \eqref{udlq2e},
this also implies
\begin{gather*}
\bigl\{\,n\sum_{k\geq n}T_k(Y_k-Y_{k+1})\bigr\}^2\leq n\sum_{k\geq
n}T_k^2\,\cdot\,n\sum_{k\geq
n}(Y_k-Y_{k+1})^2\overset{P}\longrightarrow 0.
\end{gather*}
Therefore, condition (ii) holds and this concludes the proof.
\end{proof}

We next turn to Theorem \ref{main}. From now on, it is assumed
$d_0<d$ (the case $d_0=d$ has been settled in \cite{BCPR}). Recall
the notations $S_n^*=\sum_{i=1}^{d_0}N_{n,i}$ and
$S_n=\sum_{i=1}^dN_{n,i}$. Note also that, by a straightforward
calculation,
\begin{gather*}
Z_{n+1,j}^*-Z_{n,j}^*=\frac{X_{n+1,j}\,A_{n+1,j}}{S_n^*+A_{n+1,j}}\,-\,Z_{n,j}^*\sum_{i=1}^{d_0}\frac{X_{n+1,i}\,A_{n+1,i}}{S_n^*+A_{n+1,i}}.
\end{gather*}

\begin{proof} [{\bf Proof of Theorem \ref{main}}] The proof is split
into two steps.

\vspace{0.2cm}

{\bf (i) $D_{n,j}^*\longrightarrow\mathcal{N}(0,V_j)$
$\mathcal{G}$-stably in strong sense.}

\noindent By Lemma \ref{mf65tr},
$Z_{n,j}^*=\frac{Z_{n,j}}{\sum_{i=1}^{d_0}Z_{n,i}}\overset{a.s.}\longrightarrow
Z_{(j)}$. Further, $P(2\,S_n^*< n\,m$ i.o.$)=0$ since
$\frac{S_n^*}{n}\overset{a.s.}\longrightarrow m$. Hence, by
Proposition \ref{beo5t}, it suffices to prove conditions
\eqref{basicnew}-\eqref{f6y8j0w}-\eqref{udlq2e} with
\begin{equation*}
\mathcal{F}_n=\mathcal{G}_n,\quad Y_n=Z_{n,j}^*,\quad
Y=Z_{(j)},\quad H_n=\{2\,S_n^*\geq n\,m\},\quad V=V_j.
\end{equation*}

Conditions \eqref{basicnew} and \eqref{f6y8j0w} trivially hold. As
to \eqref{basicnew}, note that
\begin{equation*}
Z_{n,j}^*\sum_{i=1}^{d_0}Z_{n,i}=Z_{n,j}\sum_{i=1}^{d_0}Z_{n,i}^*=Z_{n,j}.
\end{equation*}
Therefore,
\begin{gather*}
E\bigl\{Z_{n+1,j}^*-Z_{n,j}^*\mid\mathcal{G}_n\bigr\}=Z_{n,j}\,E\bigl\{\frac{A_{n+1,j}}{S_n^*+A_{n+1,j}}\mid\mathcal{G}_n\bigr\}\,-\,Z_{n,j}^*\sum_{i=1}^{d_0}Z_{n,i}\,E\bigl\{\frac{A_{n+1,i}}{S_n^*+A_{n+1,i}}\mid\mathcal{G}_n\bigr\}
\\=-Z_{n,j}\,E\bigl\{\frac{A_{n+1,j}^2}{S_n^*(S_n^*+A_{n+1,j})}\mid\mathcal{G}_n\bigr\}\,+\,Z_{n,j}^*\sum_{i=1}^{d_0}Z_{n,i}\,E\bigl\{\frac{A_{n+1,i}^2}{S_n^*(S_n^*+A_{n+1,i})}\mid\mathcal{G}_n\bigr\},
\\\text{so that }\,\,I_{H_n}\,\Abs{\,E\bigl\{Z_{n+1,j}^*-Z_{n,j}^*\mid\mathcal{G}_n\bigr\}}\leq
I_{H_n}\,\frac{d_0\,\beta^2}{(S_n^*)^2}\leq\frac{4\,d_0\,\beta^2}{m^2}\,\frac{1}{n^2}.
\end{gather*}
As to \eqref{f6y8j0w},
\begin{gather*}
\Abs{\,E\bigl(Z_{k+1,j}^*\mid\mathcal{G}_k\bigr)-Z_{k+1,j}^*}\leq\frac{2\,\beta}{S_k^*}\,+\,N_{k,j}\,\Abs{E\bigl(\frac{1}{S_{k+1}^*}\mid\mathcal{G}_k\bigr)-\frac{1}{S_{k+1}^*}}
\\\leq\frac{2\,\beta}{S_k^*}\,+\,N_{k,j}\,\bigl(\frac{1}{S_k^*}-\frac{1}{S_k^*+\beta}\bigr)\leq\frac{3\,\beta}{S_k^*},
\\\text{so that }\,\,I_{H_n}\sup_{k\geq
n}\abs{E(Z_{k+1,j}^*\mid\mathcal{G}_k)-Z_{k+1,j}^*}\leq
I_{H_n}\,\frac{3\,\beta}{S_n^*}\leq\frac{6\,\beta}{m}\,\frac{1}{n}.
\end{gather*}

Finally, let us turn to \eqref{udlq2e}. For every
$i\in\{1,\ldots,d_0\}$,
\begin{gather*}
n^2E\bigl\{\frac{A_{n+1,i}^2}{(S_n^*+A_{n+1,i})^2}\mid\mathcal{G}_n\bigr\}\leq
n^2\frac{EA_{n+1,i}^2}{(S_n^*)^2}\,\overset{a.s.}\longrightarrow\,\frac{q_i}{m^2}\,\,\text{
and}
\\n^2E\bigl\{\frac{A_{n+1,i}^2}{(S_n^*+A_{n+1,i})^2}\mid\mathcal{G}_n\bigr\}\geq
n^2\frac{EA_{n+1,i}^2}{(S_n^*+\beta)^2}\,\overset{a.s.}\longrightarrow\,\frac{q_i}{m^2}.
\end{gather*}
Since $X_{n+1,r}\,X_{n+1,s}=0$ for $r\neq s$, it follows that
\begin{gather*}
n^2E\bigl\{(Z_{n+1,j}^*-Z_{n,j}^*)^2\mid\mathcal{G}_n\bigr\}=n^2Z_{n,j}\,(1-Z_{n,j}^*)^2E\bigl\{\frac{A_{n+1,j}^2}{(S_n^*+A_{n+1,j})^2}\mid\mathcal{G}_n\bigr\}\,+
\\+\,n^2(Z_{n,j}^*)^2\sum_{i\leq d_0,i\neq j}Z_{n,i}\,E\bigl\{\frac{A_{n+1,i}^2}{(S_n^*+A_{n+1,i})^2}\mid\mathcal{G}_n\bigr\}
\\\overset{a.s.}\longrightarrow
Z_{(j)}(1-Z_{(j)})^2\frac{q_j}{m^2}\,+\,Z_{(j)}^2\sum_{i\leq
d_0,i\neq j}Z_{(i)}\frac{q_i}{m^2}=V_j.
\end{gather*}
Let $R_{n+1}=(n+1)^2I_{H_n}\,(Z_{n+1,j}^*-Z_{n,j}^*)^2$. Since
$H_n\in\mathcal{G}_n$ and $P(I_{H_n}\neq 1$ i.o.$)=0$, then
$E(R_{n+1}\mid\mathcal{G}_n)\overset{a.s.}\longrightarrow V_j$. On
noting that
$\abs{Z_{n+1,j}^*-Z_{n,j}^*}\leq\frac{d_0\,\beta}{S_n^*}$,
\begin{equation*}
\frac{ER_n^2}{n^2}\leq
(d_0\,\beta)^4n^2E\bigl(\frac{I_{H_{n-1}}}{(S_{n-1}^*)^4}\bigr)\leq\bigl(\frac{2\,d_0\,\beta}{m}\bigr)^4\frac{n^2}{(n-1)^4}.
\end{equation*}
By Lemma \ref{hftuimn} (applied with $Y_n=R_n$),
\begin{equation*}
n\sum_{k\geq
n}I_{H_k}(Z_{k+1,j}^*-Z_{k,j}^*)^2=\frac{n}{n+1}\,(n+1)\sum_{k\geq
n+1}\frac{R_k}{k^2}\,\overset{a.s.}\longrightarrow\, V_j.
\end{equation*}
Since $P(I_{H_n}\neq 1$ i.o.$)=0$ then $n\sum_{k\geq
n}(Z_{k+1,j}^*-Z_{k,j}^*)^2\overset{a.s.}\longrightarrow\, V_j$,
that is, condition \eqref{udlq2e} holds.

\vspace{0.2cm}

{\bf (ii) $C_{n,j}^*\longrightarrow\mathcal{N}(0,U_j)$ stably.}

\vspace{0.2cm}

\noindent Define $T_{n,i}=\sum_{k=1}^nX_{k,i}$, $T_{0,i}=0$, and
note that
\begin{gather*}
C_{n,j}^*=-\frac{\sqrt{n}\,Z_{n,j}^*}{1+\sum_{i=1}^{d_0}T_{n,i}}+\frac{n}{1+\sum_{i=1}^{d_0}T_{n,i}}\,\frac{T_{n,j}-Z_{n,j}^*\sum_{i=1}^{d_0}T_{n,i}}{\sqrt{n}}\quad\text{and}
\\T_{n,j}-Z_{n,j}^*\sum_{i=1}^{d_0}T_{n,i}=\sum_{k=1}^n\bigl\{X_{k,j}-Z_{k,j}^*\sum_{i=1}^{d_0}T_{k,i}+Z_{k-1,j}^*\sum_{i=1}^{d_0}T_{k-1,i}\bigr\}
\\=\sum_{k=1}^n\bigl\{X_{k,j}-Z_{k-1,j}^*\sum_{i=1}^{d_0}X_{k,i}-\sum_{i=1}^{d_0}T_{k,i}(Z_{k,j}^*-Z_{k-1,j}^*)\bigr\}.
\end{gather*}
Define also $H_n=\{2S_n^*\geq n\,m\}$ and
\begin{gather*}
C_{n,j}^{**}=\frac{1}{\sqrt{n}}\,\sum_{k=1}^nI_{H_{k-1}}\bigl\{X_{k,j}-Z_{k-1,j}^*\sum_{i=1}^{d_0}X_{k,i}+\sum_{i=1}^{d_0}T_{k-1,i}\,\bigl(E(Z_{k,j}^*\mid\mathcal{G}_{k-1})-Z_{k,j}^*\bigr)\bigr\}.
\end{gather*}
Recalling (from point (i)) that $P(I_{H_n}\neq 1$ i.o.$)=0$,
$\lim_n\frac{\sum_{i=1}^{d_0}T_{n,i}}{n}=1$ a.s., and
$I_{H_{k-1}}\Abs{\,E\bigl\{Z_{k,j}^*-Z_{k-1,j}^*\mid\mathcal{G}_{k-1}\bigr\}}\leq\frac{c}{(k-1)^2}$
a.s. for some constant $c$, it is not hard to see that
$C_{n,j}^*\longrightarrow N$ stably if and only if
$C_{n,j}^{**}\longrightarrow N$ stably for any kernel $N$.

We next prove $C_{n,j}^{**}\longrightarrow\mathcal{N}(0,U_j)$
stably. For $k=1,\ldots,n$, let $\mathcal{F}_{n,k}=\mathcal{G}_k$
and

\begin{equation*}
Y_{n,k}=\frac{I_{H_{k-1}}\bigl\{X_{k,j}\,-\,Z_{k-1,j}^*\sum_{i=1}^{d_0}X_{k,i}+\sum_{i=1}^{d_0}T_{k-1,i}\,(E(Z_{k,j}^*\mid\mathcal{G}_{k-1})-Z_{k,j}^*)\,\bigl\}}{\sqrt{n}}.
\end{equation*}
Since $E(Y_{n,k}\mid\mathcal{F}_{n,k-1})=0$ a.s., the martingale CLT
(see Theorem 3.2 of \cite{HH}) applies. As a consequence,
$C_{n,j}^{**}=\sum_{k=1}^nY_{n,k}\longrightarrow\mathcal{N}(0,U_j)$
stably provided
\begin{equation*}
\sup_nE\bigl(\max_{1\leq k\leq
n}Y_{n,k}^2\bigr)<\infty;\,\,\,\max_{1\leq k\leq
n}\abs{Y_{n,k}}\overset{P}\longrightarrow
0;\,\,\,\sum_{k=1}^nY_{n,k}^2\overset{P}\longrightarrow U_j.
\end{equation*}
As shown in point (i),
$I_{H_{k-1}}\Abs{E(Z_{k,j}^*\mid\mathcal{G}_{k-1})-Z_{k,j}^*}\leq\frac{d}{k-1}$
a.s. for a suitable constant $d$. Hence, the first two conditions
follow from
\begin{equation*}
Y_{n,k}^2\leq\frac{2}{n}+\frac{2}{n}I_{H_{k-1}}(k-1)^2\bigl(E(Z_{k,j}^*\mid\mathcal{G}_{k-1})-Z_{k,j}^*\bigr)^2
\leq\frac{2\,(1+d^2)}{n}\quad\text{a.s.}.
\end{equation*}

To conclude the proof, it remains to see that
$\sum_{k=1}^nY_{n,k}^2\overset{P}\longrightarrow U_j$. After some
(long) algebra, the latter condition is shown equivalent to
\begin{equation}\label{mal12ed}
\frac{1}{n}\,\sum_{k=1}^nI_{H_{k-1}}\bigl\{X_{k,j}-Z_{k-1,j}^*+k\,\bigl(Z_{k-1,j}^*-Z_{k,j}^*\bigr)\bigr\}^2\overset{P}\longrightarrow
U_j.
\end{equation}
Let $R_{n+1}=(n+1)^2I_{H_n}\,(Z_{n+1,j}^*-Z_{n,j}^*)^2$. Since
$E(R_{n+1}\mid\mathcal{G}_n)\overset{a.s.}\longrightarrow V_j$, as
shown in point (i), Lemma \ref{hftuimn} implies
\begin{gather*}
\frac{1}{n}\,\sum_{k=1}^nI_{H_{k-1}}k^2\bigl(Z_{k-1,j}^*-Z_{k,j}^*\bigr)^2\overset{a.s.}\longrightarrow
V_j.
\end{gather*}
A direct calculation shows that
\begin{gather*}
\frac{1}{n}\,\sum_{k=1}^nI_{H_{k-1}}(X_{k,j}-Z_{k-1,j}^*)^2\overset{a.s.}\longrightarrow
Z_{(j)}(1-Z_{(j)}). \end{gather*} Finally, observe the following
facts
\begin{gather*}
\bigl(Z_{n,j}^*-Z_{n+1,j}^*\bigr)\,(X_{n+1,j}-Z_{n,j}^*)
=-(1-Z_{n,j}^*)\,\frac{X_{n+1,j}\,A_{n+1,j}}{S_n^*+A_{n+1,j}}-Z_{n,j}^*(Z_{n,j}^*-Z_{n+1,j}^*),
\\(n+1)\,Z_{n,j}^*\,I_{H_n}\,\Abs{E\bigl(Z_{n,j}^*-Z_{n+1,j}^*\mid\mathcal{G}_n\bigr)}\leq\frac{c\,(n+1)}{n^2}\overset{a.s.}\longrightarrow
0,
\\(n+1)\,E\bigl\{\frac{X_{n+1,j}\,A_{n+1,j}}{S_n^*+A_{n+1,j}}\mid\mathcal{G}_n\bigr\}\leq\frac{n+1}{S_n^*}\,Z_{n,j}\,EA_{n+1,j}\overset{a.s.}\longrightarrow
Z_{(j)},
\\(n+1)\,E\bigl\{\frac{X_{n+1,j}\,A_{n+1,j}}{S_n^*+A_{n+1,j}}\mid\mathcal{G}_n\bigr\}\geq\frac{n+1}{S_n^*+\beta}\,Z_{n,j}\,EA_{n+1,j}\overset{a.s.}\longrightarrow
Z_{(j)}.
\end{gather*}
Therefore,
\begin{gather*}
(n+1)\,I_{H_n}E\bigl\{\,(Z_{n,j}^*-Z_{n+1,j}^*)\,(X_{n+1,j}-Z_{n,j}^*)\mid\mathcal{G}_n\bigr\}
\overset{a.s.}\longrightarrow -Z_{(j)}(1-Z_{(j)})
\end{gather*}
and Lemma \ref{hftuimn} again implies
\begin{gather*}
\frac{2}{n}\,\sum_{k=1}^nI_{H_{k-1}}k\,(Z_{k-1,j}^*-Z_{k,j}^*)\,(X_{k,j}-Z_{k-1,j}^*)\overset{a.s.}\longrightarrow
-2\,Z_{(j)}(1-Z_{(j)}). \end{gather*} Thus condition \eqref{mal12ed}
holds, and this concludes the proof.

\end{proof}

\begin{rem}\label{h6t98d34s} Point (ii) admits a simpler proof in case $EA_{k,j}=m$
for all $k\geq 1$ and $1\leq j\leq d_0$. This happens, in
particular, if the sequence $(A_{n,1},\ldots,A_{n,d})$ is i.i.d..

Given the real numbers $b_1,\ldots,b_{d_0}$, define
\begin{equation*}
Y_{n,k}=\frac{1}{\sqrt{n}}\,\sum_{j=1}^{d_0}b_j\,X_{k,j}\,(A_{k,j}-EA_{k,j}),\quad\mathcal{F}_{n,k}=\mathcal{G}_k,\quad
k=1,\ldots,n.
\end{equation*}
By Lemma \ref{hftuimn},
$\sum_{k=1}^nY_{n,k}^2\overset{a.s.}\longrightarrow\sum_{j=1}^{d_0}b_j^2\,(q_j-m^2)\,Z_{(j)}:=L$.
Thus, the martingale CLT implies
$\sum_{k=1}^nY_{n,k}\longrightarrow\mathcal{N}(0,L)$ stably. Since
$b_1,\ldots,b_{d_0}$ are arbitrary constants,
\begin{equation*}
\Bigl(\,\frac{\sum_{k=1}^nX_{k,j}\,(A_{k,j}-EA_{k,j})}{\sqrt{n}}\,:\,j=1,\ldots,d_0\Bigr)\longrightarrow
\mathcal{N}_{d_0}(0,\Sigma)\quad\text{stably}
\end{equation*}
where $\Sigma$ is the diagonal matrix with
$\sigma_{j,j}=(q_j-m^2)Z_{(j)}$. Let $T_{n,j}=\sum_{k=1}^nX_{k,j}$.
Since $EA_{k,j}=m$ and
$\frac{T_{n,j}}{n}\overset{a.s.}\longrightarrow Z_{(j)}>0$ for all
$j\leq d_0$, one also obtains
\begin{equation*}
\Bigl(\,\sqrt{n}\,\,\bigl\{\frac{\sum_{k=1}^nX_{k,j}\,A_{k,j}}{T_{n,j}}-m\bigr\}\,:\,j=1,\ldots,d_0\Bigr)\longrightarrow
\mathcal{N}_{d_0}(0,\Gamma)\quad\text{stably}
\end{equation*}
where $\Gamma$ is diagonal with
$\gamma_{j,j}=\frac{(q_j-m^2)}{Z_{(j)}}$. Next, write
\begin{gather*}
\widetilde{C}_{n,j}:=\sqrt{n}\,\bigl(\frac{T_{n,j}}{\sum_{i=1}^{d_0}T_{n,i}}-\frac{\sum_{k=1}^nX_{k,j}A_{k,j}}{\sum_{i=1}^{d_0}\sum_{k=1}^nX_{k,i}A_{k,i}}\bigr)
\\=\frac{T_{n,j}}{\sum_{i=1}^{d_0}\sum_{k=1}^nX_{k,i}A_{k,i}}\,\frac{\sum_{i\leq d_0,i\neq
j}T_{n,i}}{\sum_{i=1}^{d_0}T_{n,i}}\,\sqrt{n}\,\bigl(m-\frac{\sum_{k=1}^nX_{k,j}A_{k,j}}{T_{n,j}}\,\bigr)\,+
\\+\,\frac{T_{n,j}}{\sum_{i=1}^{d_0}\sum_{k=1}^nX_{k,i}A_{k,i}}\,\frac{1}{\sum_{i=1}^{d_0}T_{n,i}}\,\sum_{i\leq d_0,i\neq
j}T_{n,i}\,\sqrt{n}\,\bigl(\frac{\sum_{k=1}^nX_{k,i}A_{k,i}}{T_{n,i}}-m\bigr).
\end{gather*}
Clearly, $C_{n,j}^*-\widetilde{C}_{n,j}\overset{a.s.}\longrightarrow
0$. To conclude the proof, it suffices noting that
$\widetilde{C}_{n,j}$ converges stably to the Gaussian kernel with
mean 0 and variance
\begin{gather*}
\Bigl(\frac{Z_{(j)}(1-Z_{(j)})}{m}\Bigr)^2\,\frac{q_j-m^2}{Z_{(j)}}\,+\,\frac{Z_{(j)}^2}{m^2}\sum_{i\leq
d_0,i\neq j}Z_{(i)}^2\,\frac{q_i-m^2}{Z_{(i)}}=U_j.
\end{gather*}

\end{rem}

\vspace{0.1cm}

\begin{center}{\bf APPENDIX}\end{center}

\vspace{0.1cm}

\begin{proof} [{\bf Proof of Lemma \ref{mf65tr}}]
We first note that $N_{n,j}\overset{a.s.}\longrightarrow\infty$ for
each $j\leq d_0$. Arguing as in the proof of Proposition 2.3 of
\cite{MF}, in fact, $\sum_{n=1}^\infty X_{n,j}=\infty$ a.s.. Hence,
$\sum_{k=1}^nX_{k,j}\,EA_{k,j}\overset{a.s}\longrightarrow\infty$,
and $N_{n,j}\overset{a.s.}\longrightarrow\infty$ follows from
\begin{equation*}
L_n=N_{n,j}-\bigl\{a_j+\sum_{k=1}^nX_{k,j}\,EA_{k,j}\bigr\}=\sum_{k=1}^nX_{k,j}\,\bigl(A_{k,j}-EA_{k,j}\bigr)
\end{equation*}
is a $\mathcal{G}$-martingale such that $\abs{L_{n+1}-L_n}\leq\beta$
for all $n$.

We also need the following fact.

\vspace{0.2cm}

{\em CLAIM:}  $\tau_{n,j}=\frac{N_{n,j}}{(S_n^*)^\lambda}\,$
converges a.s. for all $j>d_0$ and $\lambda\in
(\frac{\lambda_0}{m},\,1)$.

On noting that $(1-x)^\lambda\leq1-\lambda\,x$ for $0\leq x\leq 1$
and $\sum_{i=1}^{d_0}Z_{n,i}=\frac{S_n^*}{S_n}$ , one can estimate
as follows
\begin{gather*}
E\bigl\{\frac{\tau_{n+1,j}}{\tau_{n,j}}-1\mid\mathcal{G}_n\bigr\}=E\bigl\{\frac{N_{n,j}+X_{n+1,j}\,A_{n+1,j}}{N_{n,j}}\,(\frac{S_n^*}{S_{n+1}^*})^\lambda\mid\mathcal{G}_n\bigr\}-1
\\\leq\frac{Z_{n,j}\,EA_{n+1,j}}{N_{n,j}}\,+\,E\bigl\{(\frac{S_n^*}{S_{n+1}^*})^\lambda-1\mid\mathcal{G}_n\bigr\}
\\\leq\frac{EA_{n+1,j}}{S_n}\,-\,\lambda\,\sum_{i=1}^{d_0}E\bigl\{\frac{X_{n+1,i}\,A_{n+1,i}}{S_{n+1}^*}\mid\mathcal{G}_n\bigr\}
\\\leq\frac{EA_{n+1,j}}{S_n}\,-\,\lambda\,\sum_{i=1}^{d_0}\frac{Z_{n,i}\,EA_{n+1,i}}{S_n^*+\beta}
\\=\frac{EA_{n+1,j}}{S_n}\,-\,\lambda\,EA_{n+1,1}\frac{S_n^*}{S_n(S_n^*+\beta)}
\\=\frac{1}{S_n}\,\bigl(EA_{n+1,j}\,-\,\lambda\,EA_{n+1,1}\frac{S_n^*}{S_n^*+\beta}\bigr)\quad\text{a.s.}.
\end{gather*}
Since
$\limsup_n\bigl(EA_{n+1,j}-\lambda\,EA_{n+1,1}\bigr)\leq\lambda_0-\lambda\,m<0$,
there are $\epsilon>0$ and $n_0\geq 1$ such that
$EA_{n+1,j}-\lambda\,EA_{n+1,1}\leq-\epsilon$ whenever $n\geq n_0$.
Thus,
\begin{equation*}
E\bigl\{\tau_{n+1,j}-\tau_{n,j}\mid\mathcal{G}_n\bigr\}=\tau_{n,j}\,E\bigl\{\frac{\tau_{n+1,j}}{\tau_{n,j}}-1\mid\mathcal{G}_n\bigr\}\leq
0\quad\text{a.s. whenever }n\geq n_0\text{ and }S_n^*\geq c
\end{equation*}
for a suitable constant $c$. Since $S_n^*\geq
N_{n,1}\overset{a.s.}\longrightarrow\infty$, thus, $(\tau_{n,j})$ is
eventually a non negative $\mathcal{G}$-super-martingale. Hence,
$\tau_{n,j}$ converges a.s..

Let $\lambda\in (\frac{\lambda_0}{m},\,1)$. A first consequence of
the Claim is that
$Z_{n,j}\leq\frac{\tau_{n,j}}{S_n^{1-\lambda}}\overset{a.s.}\longrightarrow
0$ for each $j>d_0$. Letting $Y_n=\sum_{i=1}^{d_0}X_{n,i}\,A_{n,i}$,
this implies
\begin{equation*}
E(Y_{n+1}\mid\mathcal{G}_n)=\sum_{i=1}^{d_0}Z_{n,i}\,EA_{n+1,i}=EA_{n+1,1}\,(1-\sum_{i=d_0+1}^dZ_{n,i})\overset{a.s.}\longrightarrow
m.
\end{equation*}
Thus, Lemma \ref{hftuimn} yields
$\frac{S_n^*}{n}\overset{a.s.}\longrightarrow m$. Similarly,
$\frac{S_n}{n}\overset{a.s.}\longrightarrow m$. Applying the Claim
again,
\begin{equation*}
n^{1-\lambda}Z_{n,j}=(\frac{n}{S_n})^{1-\lambda}\,(\frac{S_n^*}{S_n})^\lambda\,\tau_{n,j}\quad\text{converges
a.s. for each }j>d_0.
\end{equation*}
Since $j>d_0$ and $\lambda\in (\frac{\lambda_0}{m},\,1)$ are
arbitrary, it follows that
$n^{1-\lambda}\sum_{j=d_0+1}^dZ_{n,j}\overset{a.s.}\longrightarrow
0$ for each $\lambda>\frac{\lambda_0}{m}$.

Next, fix $j\leq d_0$. For $Z_{n,j}$ to converge a.s., it suffices
that
\begin{equation*}
\sum_nE\bigl\{Z_{n+1,j}-Z_{n,j}\mid\mathcal{G}_n\bigr\}\,\text{ and
}\,
\sum_nE\bigl\{(Z_{n+1,j}-Z_{n,j})^2\mid\mathcal{G}_n\bigr\}\quad\text{converge
a.s.};
\end{equation*}
see Lemma 3.2 of \cite{PV}. Since
\begin{gather*}
Z_{n+1,j}-Z_{n,j}=\frac{X_{n+1,j}\,A_{n+1,j}}{S_n+A_{n+1,j}}\,-\,Z_{n,j}\sum_{i=1}^d\frac{X_{n+1,i}\,A_{n+1,i}}{S_n+A_{n+1,i}},
\end{gather*}
then $\abs{Z_{n+1,j}-Z_{n,j}}\leq\frac{d\,\beta}{S_n}$. Hence,
\begin{equation*}
\sum_nE\bigl\{(Z_{n+1,j}-Z_{n,j})^2\mid\mathcal{G}_n\bigr\}\leq
d^2\beta^2\sum_n\frac{1}{n^2}\,(\frac{n}{S_n})^2<\infty\quad\text{a.s.}.
\end{equation*}
Moreover,
\begin{gather*}
E\bigl\{Z_{n+1,j}-Z_{n,j}\mid\mathcal{G}_n\bigr\}=Z_{n,j}\,E\bigl\{\frac{A_{n+1,j}}{S_n+A_{n+1,j}}\mid\mathcal{G}_n\bigr\}
\,-\,Z_{n,j}\sum_{i=1}^dZ_{n,i}\,E\bigl\{\frac{A_{n+1,i}}{S_n+A_{n+1,i}}\mid\mathcal{G}_n\bigr\}
\\=-Z_{n,j}\,E\bigl\{\frac{A_{n+1,j}^2}{S_n(S_n+A_{n+1,j})}\mid\mathcal{G}_n\bigr\}
\,+\,Z_{n,j}\sum_{i=1}^dZ_{n,i}\,E\bigl\{\frac{A_{n+1,i}^2}{S_n(S_n+A_{n+1,i})}\mid\mathcal{G}_n\bigr\}\,+
\\+\,Z_{n,j}\,\frac{EA_{n+1,j}}{S_n}\,-\,Z_{n,j}\sum_{i=1}^dZ_{n,i}\,\frac{EA_{n+1,i}}{S_n}\quad\text{a.s., and}
\\EA_{n+1,j}\,-\,\sum_{i=1}^dZ_{n,i}\,EA_{n+1,i}=
EA_{n+1,1}\sum_{i=d_0+1}^{d}Z_{n,i}\,-\sum_{i=d_0+1}^{d}Z_{n,i}EA_{n+1,i}.
\end{gather*}
Therefore, $\sum_nE\bigl\{Z_{n+1,j}-Z_{n,j}\mid\mathcal{G}_n\bigr\}$
converges a.s. since
\begin{gather*}
\Abs{E\bigl\{Z_{n+1,j}-Z_{n,j}\mid\mathcal{G}_n\bigr\}}\leq
\frac{d\,\beta^2}{S_n^2}+2\,\beta\,\frac{\sum_{i=d_0+1}^dZ_{n,i}}{S_n}
=\,\text{o}(n^{\lambda-2})\quad\text{a.s. for each }\lambda\in
(\frac{\lambda_0}{m},\,1).
\end{gather*}

Thus, $Z_{n,j}\overset{a.s.}\longrightarrow Z_{(j)}$ for some random
variable $Z_{(j)}$. To conclude the proof, we let
$Y_{n,i}=\log\frac{Z_{n,i}}{Z_{n,1}}$ and prove that
\begin{gather*}
\sum_nE\bigl\{Y_{n+1,i}-Y_{n,i}\mid\mathcal{G}_n\bigr\}\,\text{ and
}\,
\sum_nE\bigl\{(Y_{n+1,i}-Y_{n,i})^2\mid\mathcal{G}_n\bigr\}\text{
converge a.s. whenever }i\leq d_0.
\end{gather*} In this case, in fact, $\log\frac{Z_{n,i}}{Z_{n,1}}$
converges a.s. for each $i\leq d_0$ and this implies $Z_{(i)}>0$
a.s. for each $i\leq d_0$.

Since
$Y_{n+1,i}-Y_{n,i}=X_{n+1,i}\log\bigl(1+\frac{A_{n+1,i}}{N_{n,i}}\bigr)-X_{n+1,1}\log\bigl(1+\frac{A_{n+1,1}}{N_{n,1}}\bigr)$,
then
\begin{equation*}
E\bigl\{Y_{n+1,i}-Y_{n,i}\mid\mathcal{G}_n\bigr\}=Z_{n,i}E\bigl\{\log\bigl(1+\frac{A_{n+1,i}}{N_{n,i}}\bigr)\mid\mathcal{G}_n\bigr\}
-Z_{n,1}E\bigl\{\log\bigl(1+\frac{A_{n+1,1}}{N_{n,1}}\bigr)\mid\mathcal{G}_n\bigr\}\quad\text{a.s..}
\end{equation*}
Since $EA_{n+1,i}=EA_{n+1,1}$, a second order Taylor expansion of
$x\mapsto\log(1+x)$ yields
\begin{equation*}
\Abs{E\bigl\{Y_{n+1,i}-Y_{n,i}\mid\mathcal{G}_n\bigr\}}\leq\frac{\beta^2}{S_n}\,\bigl(\frac{1}{N_{n,i}}+\frac{1}{N_{n,1}}\bigr)
\quad\text{a.s.}.
\end{equation*}
A quite similar estimate holds for
$E\bigl\{(Y_{n+1,i}-Y_{n,i})^2\mid\mathcal{G}_n\bigr\}$. Thus, it
suffices to see
\begin{equation*}
\sum_n\frac{1}{S_n\,N_{n,i}}<\infty\quad\text{a.s. for each }i\leq
d_0.
\end{equation*}
Define $R_{n,i}=\frac{(S_n^*)^u}{N_{n,i}}$ where $u\in (0,1)$ and
$i\leq d_0$. Since $(1+x)^u\leq 1+u\,x$ for $x\geq 0$, one can
estimate as
\begin{gather*}
E\bigl\{\frac{R_{n+1,i}}{R_{n,i}}-1\mid\mathcal{G}_n\bigr\}=E\bigl\{(\frac{S_{n+1}^*}{S_n^*})^u-1\mid\mathcal{G}_n\bigr\}
-E\bigl\{(\frac{S_{n+1}^*}{S_n^*})^u\,\frac{X_{n+1,i}\,A_{n+1,i}}{N_{n,i}+A_{n+1,i}}\mid\mathcal{G}_n\bigr\}
\\\leq u\,E\bigl\{\frac{S_{n+1}^*-S_n^*}{S_n^*}\mid\mathcal{G}_n\bigr\}
-E\bigl\{\frac{X_{n+1,i}\,A_{n+1,i}}{N_{n,i}+\beta}\mid\mathcal{G}_n\bigr\}
\\=\frac{u}{S_n^*}\,\sum_{p=1}^{d_0}Z_{n,p}\,EA_{n+1,p}
-\frac{Z_{n,i}EA_{n+1,i}}{N_{n,i}+\beta}
=\frac{EA_{n+1,1}}{S_n}\,\bigl\{u-\frac{N_{n,i}}{N_{n,i}+\beta}\bigr\}\,\text{
a.s..}
\end{gather*}
As in the proof of the Claim, thus,
\begin{equation*}
E\bigl\{R_{n+1,i}-R_{n,i}\mid\mathcal{G}_n\bigr\}=R_{n,i}\,E\bigl\{\frac{R_{n+1,i}}{R_{n,i}}-1\mid\mathcal{G}_n\bigr\}\leq
0\quad\text{a.s. whenever }N_{n,i}\geq c
\end{equation*}
for a suitable constant $c$. Since
$N_{n,i}\overset{a.s.}\longrightarrow\infty$, then $(R_{n,i})$ is
eventually a non negative $\mathcal{G}$-super-martingale, so that
$R_{n,i}$ converges a.s.. Hence,
\begin{equation*}
\sum_n\frac{1}{S_n\,N_{n,i}}=\sum_n\frac{R_{n,i}}{S_n\,(S_n^*)^u}=\sum_nR_{n,i}\,\frac{n}{S_n}\,(\frac{n}{S_n^*})^u\,\frac{1}{n^{1+u}}<\infty\quad\text{a.s..}
\end{equation*}
This concludes the proof.

\end{proof}


\begin{thebibliography}{99}

\bibitem{AMS} Aletti G., May C. and Secchi P. (2008) A central limit theorem, and
related results, for a two-color randomly reinforced urn, {\em
Preprint}, currently available at: \verb"ArXiv:math.PR/0811.2097v1"

\bibitem{BH} Bay Z.D. and Hu F. (2005) Asymptotics in
randomized urn models, {\em Ann. Appl. Probab.}, 15, 914-940.

\bibitem{BPR}Berti P., Pratelli L. and Rigo P. (2004)
Limit theorems for a class of identically distributed random
variables, {\em Ann. Probab.}, 32, 2029-2052.

\bibitem{BCPR}Berti P., Crimaldi I., Pratelli L. and Rigo P. (2009)
A central limit theorem and its applications to multicolor randomly
reinforced urns, {\em submitted}, currently available at:
\verb"http://arxiv.org/abs/0904.0932"

\bibitem{CLP}Crimaldi I., Letta G. and Pratelli L. (2007)
A strong form of stable convergence, {\em Sem. de Probab. XL}, LNM,
1899, 203-225.

\bibitem{HH} Hall P. and Heyde C.C. (1980)
{\em Martingale limit theory and its applications}, Academic Press.

\bibitem{J04} Janson S. (2004) Functional limit theorems for
multitype branching processes and generalized Polya urns, {\em
Stoch. Proc. Appl.}, 110, 177-245.

\bibitem{J05} Janson S. (2005) Limit theorems for triangular urn
schemes, {\em Probab. Theo. Rel. Fields}, 134, 417-452.

\bibitem{MF} May C. and Flournoy N. (2009) Asymptotics in response-adaptive designs generated by
a two-color, randomly reinforced urn, {\em Ann. Statist.}, 37,
1058-1078.

\bibitem{MPS} Muliere P., Paganoni A.M. and Secchi P. (2006) A randomly reinforced urn, {\em
J. Statist. Plann. Inference}, 136, 1853-1874.

\bibitem{PV} Pemantle R. and Volkov S. (1999) Vertex-reinforced random walk on $\mathbb{Z}$ has finite range,
{\em Ann. Probab.}, 27, 1368-1388.

\bibitem{P} Pemantle R.  (2007) A survey of random processes with reinforcement, {\em Probab. Surveys}, 4, 1-79.

\end{thebibliography}
\end{document}